\definecolor{orange}{rgb}{1,0.5,0}
\DeclareMathAlphabet{\mathpzc}{OT1}{pzc}{L}{it} 
\newtheorem{definition}{Definition}[section]
\newtheorem{proposition}[definition]{Proposition}
\newtheorem{theorem}{Theorem}
\newtheorem{corollary}[definition]{Corollary}
\newtheorem{remark}[definition]{Remark}
\newtheorem{lemma}[definition]{Lemma}
\def\R{\mathbb{R}}
\def\T{\mathbb{T}}
\def\Z{\mathbb{Z}}
\def\N{\mathbb{N}}
\def\Q{\mathbb{Q}}
\def\cB{\mathcal{B}}
\def\cT{\mathcal T}
\def\cR{\mathcal R}
\def\cC{\mathcal{C}}
\newcommand{\bea}{\begin{eqnarray}}
  \newcommand{\eea}{\end{eqnarray}}
  \newcommand{\beab}{\begin{eqnarray*}}
  \newcommand{\eeab}{\end{eqnarray*}}
  \newcommand{\be}{\begin{equation}}
  \newcommand{\ee}{\end{equation}}
\newcommand{\cD}{\mathcal D}
 \newcommand{\zdk}{(Z,\mathcal{D},\kappa)}
\title{Rigidity of a class of smooth singular flows on $\T^2$}
\author{Changguang Dong and Adam Kanigowski}
\date{\today}
\begin{document}
\baselineskip=14pt \maketitle
\begin{abstract}
We study {\it joining rigidity} in the class of von Neumann flows with one singularity. They are given by a smooth vector field $\mathcal{X}$ on $\T^2\setminus \{a\}$, where $\mathcal{X}$ is not defined at $a\in \T^2$. It follows that the phase space can be decomposed into a (topological disc) $D_\mathcal{X}$ and an ergodic component $E_\mathcal{X}=\T^2\setminus D_\mathcal{X}$. Let $\omega_\mathcal{X}$ be the 1-form associated to $\mathcal{X}$. We show that if $|\int_{E_{\mathcal{X}_1}}d\omega_{\mathcal{X}_1}|\neq
|\int_{E_{\mathcal{X}_2}}d\omega_{\mathcal{X}_2}|$, then the corresponding flows $(v_t^{\mathcal{X}_1})$ and $(v_t^{\mathcal{X}_2})$ are disjoint. It also follows that for every $\mathcal{X}$ there is a uniquely associated frequency $\alpha=\alpha_{\mathcal{X}}\in \T$. We show that for a full measure set of $\alpha\in \T$ the class of smooth time changes of $(v_t^\mathcal{X_\alpha})$  is {\em joining rigid}, i.e.\ every two smooth time changes are either cohomologous or disjoint. This gives a natural class of flows for which the answer to Problem 3 in \cite{Rat4} is positive.

\end{abstract}

\tableofcontents

\section{Introduction}
This paper deals with {\em joinings} in the class of von Neumann flows. Von Neumann flows were introduced in \cite{vNe} as the first systems with continuous spectrum (weakly mixing systems). They are given by a smooth vector field $\mathcal{X}$ on $\T^2\setminus\{a_1,\ldots,a_k\}$, where the vector field is not defined (singular) at $a_i$, $i=1,\ldots,k$. We will be interested in the situation where $\mathcal{X}$ has just one singularity $a\in \T^2$. More precisely (see e.g. \cite{FrLem3}), let $p:\T^2\to \R$ be a $C^\infty$, positive function on $\T^2\setminus\{a\}$ and $p(a)=0$. The vector field $\mathcal{X}$ is given by 
$\mathcal{X}:=\frac{\mathcal{X}_H}{p(\cdot)}$, where $\mathcal{X}_H$ is a Hamiltonian vector field (generating a smooth flow $(h_t)$), $\mathcal{X}_H=\left( \frac{\partial H}{\partial y },-\frac{\partial H}{\partial x}\right)$, where $H:\R^2\to \R$ is $1$-periodic and $a\in\T^2$ is (the only) critical point for $H$ (on $\T^2$). Then the von Neumann flow $(v^\mathcal{X}_t)$ is given by the solution of
$$
 \frac{d \bar{x}}{dt}=\mathcal{X}(\bar{x}).
$$
Notice that the orbits of $(v^\mathcal{X}_t)$ and $(h_t)$ are the same (modulo the fixed point of $(h_t)$). Therefore, by \cite{Arn}, it follows that the phase space decomposes into one region $D_\mathcal{X}$ (homeomorphic to the disc) filled with periodic orbits and an ergodic component $E_\mathcal{X}=\T^2\setminus D_\mathcal{X}$. Let $\omega_\mathcal{X}(Y)=\frac{\langle \mathcal{X},Y\rangle}{\langle \mathcal{X},\mathcal{X}\rangle}$ (notice that $\omega_\mathcal{X}$ is $C^\infty(\T^2\setminus \{a\}$)). Our main theorem is the following:

\begin{theorem}\label{th0}Let $(v^{\mathcal{X}_1}_t)$ and $(v^{\mathcal{X}_2}_t)$ be such that $|\int_{E_{\mathcal{X}_1}}d\omega_{\mathcal{ X}_1}|\neq \int_{E_{\mathcal{X}_2}}d\omega_{\mathcal{ X}_2}|$. Then  $(v^{\mathcal{X}_1}_t)$ and $(v^{\mathcal{X}_2}_t)$ are disjoint.
\end{theorem}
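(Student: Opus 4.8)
\medskip
\noindent\emph{Sketch of proof.} Throughout, disjointness is understood for the flows restricted to their ergodic components $\big(E_{\mathcal X_i},\,v^{\mathcal X_i}_t\big)$ with the normalised smooth invariant measure (the periodic discs carry, as usual, no information relevant to the statement). Fix a smooth transversal to $v^{\mathcal X}$ inside $E_{\mathcal X}$; the first-return map identifies $\big(v^{\mathcal X}_t\big)|_{E_{\mathcal X}}$ with the special flow $T^{f_{\mathcal X}}$ over the rotation $R_{\alpha_{\mathcal X}}$ of $\T$, where the return-time roof $f_{\mathcal X}$ is $C^\infty$ off one point $x_0$ and has there a jump discontinuity (the standard picture for $\mathcal X=\mathcal X_H/p$ with a stopping point, cf.\ \cite{vNe,FrLem3}). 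Since $\omega_{\mathcal X}=p\,\omega_{\mathcal X_H}$ and $\omega_{\mathcal X}$ restricts along $v^{\mathcal X}$-orbits to $dt$, Green's theorem over $E_{\mathcal X}$ yields
\[
\Big|\int_{E_{\mathcal X}}d\omega_{\mathcal X}\Big| \;=\; \Big|\oint_{\partial E_{\mathcal X}}\omega_{\mathcal X}\Big|,
\]
the right-hand side being the (finite) $v^{\mathcal X}$-time needed to run along the separatrix loop $\partial E_{\mathcal X}=\partial D_{\mathcal X}$; this equals, up to a universal constant, the absolute jump $\big|f_{\mathcal X}(x_0^+)-f_{\mathcal X}(x_0^-)\big|$. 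So Theorem~\ref{th0} becomes: \emph{if the roofs of $T^{f_{\mathcal X_1}}$ and $T^{f_{\mathcal X_2}}$ have jumps of different absolute value, the two flows are disjoint.}

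Each $T^{f_{\mathcal X_i}}$ is weakly mixing (von Neumann, \cite{vNe}) and, being a special flow over a rotation under a piecewise-$C^\infty$ roof with a single jump, enjoys a Ratner-type property (the weak/``switchable'' Ratner property relevant to Problem~3 of \cite{Rat4}); this is known for these flows. I use the resulting joining dichotomy: if $(S_t)$ has this property and $\rho$ is an ergodic joining of $(S_t)$ with an arbitrary flow $(R_t)$, then $\rho$ is either the product measure or is carried by the graph of a factor map $(R_t)\to(S_t)$ composed with a time shift, up to a finite-index cover. Suppose $\rho$ is an ergodic joining of $T^{f_{\mathcal X_1}}$ and $T^{f_{\mathcal X_2}}$ other than the product. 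Applying the dichotomy with $S=T^{f_{\mathcal X_1}}$ presents $\rho$ as a graph of a factor map $T^{f_{\mathcal X_2}}\to T^{f_{\mathcal X_1}}$; applying it to the \emph{same} $\rho$ with $S=T^{f_{\mathcal X_2}}$ (valid, as that flow also has the property) presents $\rho$ as a graph of a factor map in the opposite direction. The two descriptions force the factor maps to be mutually inverse, so $T^{f_{\mathcal X_1}}$ and $T^{f_{\mathcal X_2}}$ are isomorphic, the only residual ambiguity being a constant time shift and a finite cover.

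To conclude, it suffices that $\big|\int_{E_{\mathcal X}}d\omega_{\mathcal X}\big|$ — equivalently the absolute jump of $f_{\mathcal X}$ — be a measure-theoretic isomorphism invariant of $T^{f_{\mathcal X}}$, unaffected by finite covers; this is known, as the jump parameter can be recovered from the internal structure of the flow (for instance from its rigidity times and the associated partial-rigidity constant). Then the isomorphism obtained above forces $\big|\int_{E_{\mathcal X_1}}d\omega_{\mathcal X_1}\big|=\big|\int_{E_{\mathcal X_2}}d\omega_{\mathcal X_2}\big|$, contradicting the hypothesis. Hence every ergodic joining of the two flows is the product measure; integrating over the ergodic decomposition of an arbitrary joining shows that $\big(v^{\mathcal X_1}_t\big)$ and $\big(v^{\mathcal X_2}_t\big)$ are disjoint.

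The crux is the second step of the previous paragraph: one needs the Ratner-type property for precisely this family of von Neumann flows, and the joining dichotomy in a form strong enough to promote an arbitrary non-product ergodic joining to a (finitely ambiguous) graph joining — delicate, since the weak/switchable property only controls the slow divergence of nearby orbits along a subsequence of times. The other genuine work is showing that the jump parameter is a bona fide \emph{measure-theoretic} (not merely smooth or topological) invariant and is insensitive to finite covers. The Green's-theorem identification is routine in spirit but requires care near the stopping point, where $\omega_{\mathcal X}$ is integrated against the weight $p$ that vanishes at $a$.
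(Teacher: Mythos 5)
Your argument takes a genuinely different route from the paper, and it has a serious gap at its foundation. The paper proves Theorem~\ref{th0} by reducing it to Theorem~\ref{th1} about special flows and then verifying a \emph{quantitative} disjointness criterion (Proposition~\ref{cri}, adapted from \cite{KLU}): one directly exhibits, for any prospective joining, segments of matched orbits that diverge ``against each other'' in incompatible ways, splitting into the unbounded/bounded-type cases for $\alpha,\beta$. There is no appeal to a Ratner property, to any joining dichotomy, or to isomorphism invariance of $|A_f|$. Your reduction to a special flow and the identification of $\big|\int_{E_{\mathcal X}}d\omega_{\mathcal X}\big|$ with the roof's jump is in the same spirit as the paper's, but the engine driving disjointness is fundamentally different.

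The gap: you assert that every von Neumann flow with one singularity ``enjoys a Ratner-type property \ldots; this is known for these flows.'' It is not. The (weak, finite, switchable) Ratner property is established for such special flows only under Diophantine restrictions on $\alpha$ (bounded type, or similar arithmetic conditions), cf.\ \cite{FrLem}, whereas Theorem~\ref{th1} carries \emph{no} assumption on $\alpha$ or $\beta$ --- the paper stresses this explicitly. For Liouville $\alpha$ the flow is rigid (so weakly but not mildly mixing), and no form of the Ratner dichotomy that would give you ``non-product ergodic joining $\Rightarrow$ finite-to-one graph joining'' is available. Precisely because of this, the authors replace the Ratner route by the criterion from \cite{KLU}, which is designed to work in this more robust, quantitatively parabolic setting. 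Beyond this, two further steps are asserted without proof: that presenting $\rho$ as a graph of factor maps in both directions forces an isomorphism (careful bookkeeping with finite extensions and time shifts is needed), and that $|A_f|$ is a measure-theoretic isomorphism invariant insensitive to finite covers (this would itself be a nontrivial theorem --- indeed Theorem~\ref{th1}, via disjointness, implies it, not the other way around). As written, the proposal presupposes results at least as strong as what is to be proved, and in a case (general $\alpha$) where the inputs are unavailable.
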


An important consequence of Theorem \ref{th0} is related to {\it joining rigidity} of {\it time changes}. Recall that if $(\phi_t)$ is a flow on $M$ generated by a vector field $Z_\phi$ and $\tau:M\to R_{>0}$, then the time changed flow $(\phi_t^\tau)$ is generated by the vector field $\tau(\cdot)Z_\phi$. In \cite{Rat4}, M.\ Ratner established strong rigidity phenomena for $C^1$ time changes of horocycle flows. Namely, Ratner showed that if $\tau_1$ and $\tau_2$ are time changes  of $(h_t^1)$ and $(h_t^2)$ acting respectively on $SL(2,\R)/\Gamma$ and $SL(2,\R)/\Gamma'$, then either the time changed flows $(h^{\tau_1}_{t})$ and $(h^{\tau_2}_{t})$ are disjoint or $\tau_1$ and $\tau_2$ are {\em jointly cohomologous} (see Definition 2 in \cite{Rat4}). Moreover, M.\ Ratner posed a problem (see Problem 3 in \cite{Rat4}) asking whether there are other classes of measure preserving flows for which the class of smooth functions is {\em joining rigid}, i.e. any joining between any smooth time changes is of algebraic nature (Definition 2 in \cite{Rat4}). Recall that the only natural class beyond horocycle flows for which the class of smooth functions is joining rigid is the class of linear flows on $\T^2$ with diophantine frequencies. Indeed, it follows by \cite{Kol} that every two smooth time changes are cohomologous. In this case however, the first part of the alternative (disjointness) can never be observed.  
Theorem \ref{th0} gives an answer to Ratner's problem in a strong sense, moreover one can observe non-trivial joining rigidity phenomena (both cases, i.e. disjointness and cohomology are realizable). Namely we have the following corollary:
\begin{corollary}\label{cor1} There exists a full measure set $\cD\subset \T$ such that for every $\alpha\in \cD$
the flow\footnote{On the ergodic component $E_{\mathcal{X}_\alpha}$.} $(v_t^{\mathcal{X}_\alpha})$ is (strongly) joining rigid; i.e. for any $\psi,\phi\in C^{\infty}(\T^2)$ with 
$\int_{\T^2}\psi=\int_{\T^2}\phi$, either $\psi$ and $\phi$ are cohomologous\footnote{This is equivalent to $\int_{\partial D_\mathcal{X}} \psi(v^{\mathcal{X}_\alpha}_t)dt=\int_{\partial D_\mathcal{X}}\phi(v^{\mathcal{X}_\alpha}_t)dt$.}, or the time changed flows $(v^{\mathcal{X}_\alpha,\psi}_t)$ and $(v^{\mathcal{X}_\alpha,\phi})$ are disjoint.
\end{corollary}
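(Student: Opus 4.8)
The plan is to recognise smooth time changes of $(v_t^{\mathcal X_\alpha})$ as von Neumann flows, compute for them the invariant of Theorem \ref{th0}, and then invoke Theorem \ref{th0}. First I would reduce to $\psi,\phi>0$ (a time change is only defined for positive functions, and $\int_{\T^2}\psi=\int_{\T^2}\phi$ is preserved) and note that $(v_t^{\mathcal X_\alpha,\psi})$, being generated by $\psi\,\mathcal X_\alpha=\mathcal X_H/(p/\psi)$, is the von Neumann flow $(v_t^{\mathcal X_\psi})$ with $\mathcal X_\psi:=\psi\,\mathcal X_\alpha$: the Hamiltonian part is unchanged, $p/\psi\in C^\infty(\T^2)$ is still positive off $a$ and vanishes there to the same order, so $D_{\mathcal X_\psi}=D_{\mathcal X_\alpha}$ and $E_{\mathcal X_\psi}=E_{\mathcal X_\alpha}=:E$; and a time change leaves orbits, hence the first return map to a transversal, unchanged, so $\alpha_{\mathcal X_\psi}=\alpha$. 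Thus every smooth time change of $(v_t^{\mathcal X_\alpha})$ lies in the family of frequency-$\alpha$ von Neumann flows covered by Theorem \ref{th0}.

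Next I would compute the invariant. From $\mathcal X_\psi=\psi\mathcal X_\alpha$ and $\omega_{\mathcal X}(Y)=\langle \mathcal X,Y\rangle/\langle \mathcal X,\mathcal X\rangle$ one gets $\omega_{\mathcal X_\psi}=\psi^{-1}\omega_{\mathcal X_\alpha}$; then Stokes' theorem on $E$ (the puncture at $a$ contributes nothing, since $\omega_{\mathcal X_\psi}\to0$ there), together with $\omega_{\mathcal X_\alpha}(\mathcal X_\alpha)\equiv1$ along $\partial D_{\mathcal X_\alpha}$, yields
\[
\int_{E}d\omega_{\mathcal X_\psi}=\int_{\partial D_{\mathcal X_\alpha}}\omega_{\mathcal X_\psi}=\int_{\partial D_{\mathcal X_\alpha}}\frac{dt}{\psi\big(v^{\mathcal X_\alpha}_t\big)},
\]
an explicit integral of $\psi$ along the separatrix; in the special flow picture of $(v_t^{\mathcal X_\alpha})$ over the rotation by $\alpha$ this number is, up to a universal constant, the asymmetry coefficient of the singularity of the roof produced by the time change.

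Then I would split into cases. As one of the two conclusions is that $\psi$ and $\phi$ are cohomologous, it suffices to treat $\psi\not\sim\phi$ and deduce disjointness; for that it is enough to show that, for $\alpha$ in a full measure set $\cD\subset\T$, $\psi\not\sim\phi$ forces $|\int_E d\omega_{\mathcal X_\psi}|\neq|\int_E d\omega_{\mathcal X_\phi}|$, after which Theorem \ref{th0} applies. Equivalently one must show that, for $\alpha\in\cD$, equality of these invariants (given $\int_{\T^2}\psi=\int_{\T^2}\phi$) implies $\psi\sim\phi$; this is the characterisation of cohomology recorded in the footnote to Corollary \ref{cor1}. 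Its easy direction is a telescoping identity: $\mathcal X_\alpha(h)$ integrates to $h(a)-h(a)=0$ along the separatrix, which issues from and returns to $a$. The hard part will be the converse — showing that cancellation of the asymmetry coefficient together with vanishing mean makes the difference of roofs a coboundary over the Diophantine rotation $R_\alpha$: a cohomological equation for a function whose leading singularity has been symmetrised, solvable for a.e.\ $\alpha$ (the analogue, for these singular flows, of Kolmogorov's theorem \cite{Kol} on smooth time changes of linear flows and of the cohomological analysis of \cite{FrLem3}), and $\cD$ is taken to be that full measure set.

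A caveat: if, under the conventions in force, the separatrix integral above and the footnote's obstruction do not literally agree, the residual pairs with equal invariant but $\psi\not\sim\phi$ need a separate disjointness argument — the associated special flows over $R_\alpha$ carry the same asymmetric singularity and differ only by a tame roof, so one can appeal to the Ratner-type properties of such flows, in the spirit of \cite{FrLem3}. In any case the whole argument reduces, via Theorem \ref{th0}, to the solvability for a.e.\ $\alpha$ of a cohomological equation over $R_\alpha$ with prescribed singular data, and that is where both the main difficulty and the full measure restriction lie.
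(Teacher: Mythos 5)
Your plan is essentially the paper's proof: represent the time changes as special flows over $R_\alpha$, identify the invariant of Theorem~\ref{th0} with the jump of the roof (a separatrix integral of the time-change function), invoke Theorem~\ref{th1} for disjointness when the jumps differ in modulus, and Kolmogorov's theorem \cite{Kol} over a Diophantine rotation for cohomology when they agree, taking $\cD=DC$ to be the Diophantine set, on which the jumps cancel precisely when $f_\psi-f_\phi\in C_0^\infty(\T)$ and hence is a coboundary. Your formula $\int_{\partial D}dt/\psi$ for the invariant uses the opposite time-change convention from the one the paper employs (which gives $A_{f_\psi}=\int_{\partial D}\psi\,dt$, matching the footnote), but this is immaterial to the structure of the argument, and your caveat paragraph already anticipates exactly this.
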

We will give a proof of Corollary \ref{cor1} in Section \ref{proof:cor}.

It turns out that von Neumann flows (with one singularity) can be represented (on the ergodic component) as special flows over irrational rotations and roof functions of bounded variation which are absolutely continuous except one point at which there is a jump discontinuity, which comes from the singularity of the vector field $\mathcal{X}$. More precisely, let $R_\alpha x=x+\alpha\,{\rm mod} \,1$ and let $f:\T\to\R_+$ be given by
$$
f(x)=A_f\{x\}+f_{ac}(x),
$$
where $f_{ac}\in C^1(\T)$.
Then the von Neumann flow $(v_t^{\mathcal{X}_\alpha})$ is isomorphic to the special flow $(T_t^{\alpha,f})$, where $A_f:=\int_{E_{\mathcal{X}_\alpha}}d\omega_\mathcal{X_\alpha}$.
Using the language of special flows, Theorem \ref{th0} is a straightforward consequence of the following theorem:
\begin{theorem}\label{th1}
If $|A_f|\neq |A_g|$, then the flows $\cT=(R^{\alpha,f}_t)$ and $\cR=(R^{\beta,g}_t)$ are disjoint.
\end{theorem}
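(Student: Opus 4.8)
The plan is to argue by contradiction: suppose $J$ is a nontrivial ergodic joining of $\cT=(R^{\alpha,f}_t)$ and $\cR=(R^{\beta,g}_t)$, and derive from it a forced relation between the jumps $A_f$ and $A_g$ that contradicts $|A_f|\neq|A_g|$. The engine behind such arguments for special flows over rotations with BV roof functions having a single jump is the analysis of \emph{time-$t$ push-offs} and the associated limiting behaviour along rigidity times of the base rotation. Concretely, choose a sequence $q_n$ of denominators of $\alpha$ (from the continued fraction expansion) along which $q_n\alpha\to 0$; along such times the Birkhoff sums $f^{(q_n)}$ of $f$ do not converge uniformly — they exhibit a controlled ``jump'' of size essentially $A_f$ concentrated near the preimages of the discontinuity — and this is exactly what produces a nontrivial ``off-diagonal'' self-joining type phenomenon in the time direction. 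The first step is therefore to recall (or re-derive) the precise asymptotics of $f^{(q_n)}(x)$: away from a shrinking neighbourhood of $0$ it is close to a constant $c_n$, and crossing that neighbourhood it changes by $\pm A_f$ (up to $o(1)$). The same holds for $g$ with its own denominators; but since we are free to pass to a common rigidity sequence only if $\alpha$ and $\beta$ are rationally related, the genuinely hard case is when $\alpha,\beta$ have no arithmetic relation, and one must play the two Ostrowski-type expansions against each other.

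The second step is to transport this to the joining. Let $\pi_1,\pi_2$ be the two coordinate projections from the joining space. Applying the base rotation $R_\alpha^{q_n}$ (lifted to the special flow as a near-identity map composed with a time shift by $-f^{(q_n)}$) and using that $J$ is $\cT\times\cR$-invariant, one extracts, after passing to a subsequence, a limit joining $J_\infty$ which is invariant under a larger group: in the first coordinate it sees the time-$A_f$ map (on part of the space) while in the second coordinate it sees the time-$A_g$ map — more precisely, $J_\infty$ intertwines $(R^{\alpha,f}_{A_f})$-like and $(R^{\beta,g}_{A_g})$-like elements, because the ``jump'' in the first Birkhoff sum is $A_f$ and in the second is $A_g$. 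If $|A_f|\neq|A_g|$ this is an incompatible constraint: iterating, the first flow is pushed by multiples of $A_f$ and the second by multiples of $A_g$, and since $A_f/A_g\notin\Q$ (or even just $A_f\neq\pm A_g$ combined with the flow being ergodic of a fixed ``return'' scale) the orbit closures cannot be matched, forcing $J$ to be the product measure — contradiction with nontriviality, or rather showing the only joining is trivial, i.e. disjointness.

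The technical heart — and the step I expect to be the main obstacle — is making the ``limit joining'' argument rigorous when $\alpha$ and $\beta$ are arithmetically unrelated, since then there is no single sequence $q_n$ that is a rigidity time for both rotations simultaneously; along $q_n$ (good for $\alpha$) the second roof function $g^{(q_n)}$ need not be close to constant plus a single jump, so the push-off in the second coordinate could be badly behaved. The remedy is the standard one for this circle of ideas: rather than demand simultaneous rigidity, use that $\{q_n\beta\bmod 1\}$ equidistributes (or at least has a convergent subsequence to some $\gamma$), replace the target by the flow time-changed/shifted by that limit, and show that the jump contribution $A_f$ in the first coordinate is \emph{unmatched} on the second side precisely because $|A_g|\neq|A_f|$ — the second coordinate can at most contribute a jump of size $|A_g|$ (or $0$), never $|A_f|$. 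Carrying this out cleanly requires: (i) the BV/one-jump structure to be stable under the relevant limits (it is, by the explicit form $f=A_f\{x\}+f_{ac}$); (ii) an ergodicity input to know the limit joining is still a joining of the \emph{ergodic} components; and (iii) a ``no-unmatched-jump'' lemma, which is where the hypothesis $|A_f|\ne|A_g|$ is actually consumed. I would isolate (iii) as the key lemma and prove it first, then feed it into the joining-limit machinery.
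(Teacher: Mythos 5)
Your proposal correctly identifies that the jump sizes $A_f,A_g$ must be played against each other, and you even name the obstacle that would break your argument: when $\alpha$ and $\beta$ are arithmetically unrelated there is no common rigidity sequence. But the ``remedy'' you offer does not actually remove that obstacle. The step where your argument would fail is the claim that, along a rigidity sequence $q_n$ of $\alpha$, ``the second coordinate can at most contribute a jump of size $|A_g|$ (or $0$).'' This is false in general: $q_n$ is not a denominator of $\beta$, so $g^{(q_n)}(y)$ is not close to a constant plus a single $\pm A_g$ jump. Denjoy--Koksma gives you nothing for $g$ along the $\alpha$-denominators; the Birkhoff sums $g^{(q_n)}$ can oscillate on a scale far larger than $A_g$, and the ``push-off'' on the second coordinate is not at all of the form ``time shift by $A_g$ across a small set.'' Passing to a subsequence along which $\{q_n\beta\}$ converges does not help, because the problematic quantity is the entire orbit sum $g^{(q_n)}$, not just $q_n\beta \bmod 1$. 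So the limit joining $J_\infty$ you try to extract does not have the structure (invariance under a pair of commensurable time shifts) that your contradiction requires. In effect you have localized the gap correctly but not closed it.

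The paper avoids this entirely by using a quantitative parabolic disjointness criterion (Proposition~\ref{cri}, a variant of the criterion from \cite{KLU}) rather than a joining-limit argument. It never needs rigidity on the $\beta$-side: it introduces automorphisms $A_k$ that separate points $x,x'$ by $c/q_{n_k}$ on the $\alpha$-side, while on the $\beta$-side it takes \emph{arbitrary} close pairs $y,y'$ (from a large-measure set $Z$) and classifies their interaction with the discontinuity via Lemma~\ref{3d}. In each resulting subcase it exhibits time intervals on which the Birkhoff-sum differences $f^{(n)}(x)-f^{(n)}(x')$ and $g^{([\xi^{-1}n])}(y)-g^{([\xi^{-1}n])}(y')$ are $\epsilon^2$-close to fixed numbers $p$ and $q$ with $|p-q|\geq c^2$; the crucial inequality $|p-q|\geq c^2$ is where $|A_f|\neq|A_g|$ is consumed (compare \textbf{Subcase~3} in \textbf{Case~3} and \textbf{C.} in the bounded-type case). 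This side-steps the common-rigidity issue entirely and also explains the case split by diophantine type of $\alpha,\beta$, which your proposal does not engage with but which is needed to build the automorphisms $A_k$ and the sets $E_k$ with the required hitting structure \eqref{eq:fbhit} and \eqref{eq:fbhit-1}. If you want to pursue a joining-limit route you would need to replace the ``no-unmatched-jump lemma'' by something that controls $g^{(q_n)}$ without any arithmetic relation between $\alpha$ and $\beta$, which is precisely the difficulty the KLU criterion is designed to circumvent.
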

Notice that there are no assumptions on the irrationals $\alpha,\beta\in \T$ in Theorem \ref{th1}.
The statistical orbit growth of von Neumann flows is linear and hence they exhibit features both from elliptic and parabolic paradigm. On the one hand they are never mixing, \cite{Kat}, and have singular maximal spectral type. On the other hand as shown in \cite{FrLem} and \cite{FrLem2}, if $\alpha$ is of bounded type, they are mildly mixing. Moreover, from \cite{KaSo} it follows that they are never of finite rank, in particular they don't have fast approximation property, \cite{KatSto}. Our methods rely on the parabolic features of von Neumann flows. One of the main ingredients in the proof is a variant of {\em parabolic disjointness} criterion introduced in \cite{KLU}.

\textbf{Plan of the paper.} In Section \ref{sec:def} we recall basic definitions. In Section \ref{sec:disjoint} we recall a variant of disjointness criterion from \cite{KLU}. Section \ref{proof:th1} is devoted for the proof of Theorem \ref{th1}, which we divide in two subsections  (Subsections \ref{sec:un} and \ref{sec:bo}) depending on the diophantine type of $\alpha$ and $\beta$. Finally, in Section \ref{proof:cor} we give a proof of Corollary \ref{cor1}.

\section{Definitions and notations}\label{sec:def}
\subsection{Time changes of flows}
Let $(T_t)$ be a flow on $\zdk$ and let $v\in L^1\zdk$ be a positive function. Then the time change of $(T_t)$ along $v$ is given by
$$
T^v_t(x)=T_{u(t,x)}(x),
$$
where $u:Z\times \R\to \R$ is the unique solution to
$$
\int_{0}^{u(t,x)}\tau(T_sx) ds=t.
$$
Note that the function $u=u(t,x)$ satisfies the cocycle identity: $u(t_1+t_2,x)=u(t_1,x)+u(t_2,T_t^vx)$. The new flow $(T^v_t)$ has the same orbits as the original flow. 
We say that $\psi,\phi \in L^1\zdk$ are {\it cohomologous} if there exists $\xi\in L^1\zdk$ such that for every $t\in \R$
$$
\int_{0}^t \psi(T_sx)-\phi(T_sx)ds=\xi(x)-\xi(T_tx).
$$
It follows that if $\psi,\phi$ are cohomologous, then the flows $(T_t^\psi)$ and $(T_t^\phi)$ are isomorphic.

\subsection{Disjointness, special flows}
Let $(T_t):(X,\cB,\lambda)\to (X,\cB,\lambda)$ and $(S_t):(Y,\cC,\nu)\to(Y,\cC,\nu)$ be two ergodic flows. A {\it joining} between $(T_t)$ and $(S_t)$ is any $(T_t\times S_t)$ invariant probability measure on $X\times Y$  such that
$$
\rho(A\times Y)=\lambda(A) \text{ and } \rho(X\times B)=\nu(B).
$$
We denote the set of joinings by $J((T_t),(S_t))$. We say that $(T_t)$ and $(S_t)$ are disjoint (denoted by $(T_t)\perp(S_t)$), if $J((T_t),(S_t))=\{\lambda\otimes \nu\}$.

We will be interested in disjointness in the class of special flows over irrational rotations. For $\alpha\in \R\setminus\Q$, let $[0.a_1,a_2,...]$ denote the continued fraction expansion of $\alpha$ and let $(q_n)$ denote the sequence of denominators, i.e. 
$$
q_{n+1}=a_nq_n+q_{n-1},\text{ with }q_0=q_1=1.
$$
We say that $\alpha$ is of {\it bounded type}, if  $\sup_{n\in\N}a_n<M$ for some constant $M>0$ (equivalently, if $q_{n+1}\leq C_\alpha q_n$ for every $n\in\N$), otherwise we say of {\it unbounded type}. The following set will be important in the proof of Corollary \ref{cor1}. Let 
\be\label{dioph}
DC:=\bigcup_{\tau>0}DC(\tau)\subset \T,
\ee
where 
$$
DC(\tau):=\left\{\alpha\in \T: \left|\alpha-\frac{p}{q}\right|>\frac{C_\alpha}{q^\tau}, \text{ for every } p,q\in \N \right\}.
$$

Let $R_\alpha(x)=x+\alpha\,{\rm mod}\, 1$ and $f\in L^1(\T,\cB,\lambda)$. We define the $\Z$-cocycle given by
$$
f^{(n)}(x)=\sum_{i=0}^{n-1}f(R^i_\alpha x) \text{ and } f^{(-n)}(x)=-f^{(n)}(R_\alpha^{-n}x),
$$
 for $n\geq 1$ and we set $f^{(0)}(x)=0$. Let $\T^f:=\{(x,s)\;:\; 0\leq s<f(x)\}$ and let $\cB^f$ and $\lambda^f$ denote respectively the $\sigma$-algebra $\cB\otimes\R$ and the measure $\lambda\otimes Leb$ restricted to $\T^f$.
 We define the special flow 
 $(T_t^f):(\T^f,\cB^f,\lambda^f)\to(\T^f,\cB^f,\lambda^f)$, by 
 $$
 T_t^f(x,s):=(x+N(x,s,t)\alpha, s+t-f^{(N(x,s,t))}(x)).
 $$
 We will consider the product metric on the space $\T^f$, i.e. 
 $$
 d^f((x,s),(y,r):=\|x-y\|+|s-r|.
 $$
For a set $A\subset \T$, we denote $A^f:=\{(x,s)\in \T^f\;:\; x\in A, \,0\leq s<f(x)\}$.

The following general remark follows from the definition of special flow and will be useful in the proof of Theorem \ref{th1}.

\begin{remark}\label{rem:specflow} Let $(G_t^h)$ be a special flow over an irrational rotation $G:\T\to \T$ and under $h\in {\rm BV}(\T)$, $h\in C^1(\T\setminus\{0\})$. For every $\epsilon>0$ there exists $\tilde{\delta}_\epsilon>0$ such that for every $(z,w),(z',w')\in \T^h$, $d^h((z,w),(z',w'))<\tilde{\delta}_\epsilon$, we have
$$
d^h\left(G^h_t(z,w), G^h_{t+h^{(N(z,w,t))}(z)-h^{(N(z,w,t))}(z')}(z',w')\right)<\epsilon^2,
$$
for every $t\in\R$ for which
$$
G^h_t(z,w)\notin \partial(\T^h,\epsilon),
$$
where
$\partial(\T^h,\epsilon):=\left\{\right(z,w)\in\T^h\,:\, \|z\|<\epsilon^2\text{ and } \epsilon^2<w<f(x)-\epsilon^2\}$.

Moreover, since $h\in BV(\T)\cap C^1(\T\setminus\{0\})$, for every $\epsilon>0$ there exists $\bar{\kappa}_\epsilon>0$ and $t_\epsilon>0$ such that for every $|T|\ge t_\epsilon$ and every $(z,w)\in\T^h$, we have 
\be\label{eq:largeprop}
\left|U_{z,w}\right|>(1-\epsilon^2)|T|,
\ee
where
$$
U_{z,w}:=\left\{t\in[0,T]: G^h_t(z,w)\notin \partial(\T^h,\bar\kappa_\epsilon)\right\}.
$$
Notice also that $U_{z,w}$ consists of at most $(\inf_\T h)^{-1}|T|$ disjoint intervals.
\end{remark}

\subsection{Diophantine lemmas}
Let $\beta\in \T$ and let $(q'_n)$ denote the sequence of denominators of $\beta$. We have the following lemma:
\begin{lemma}[\cite{KaSo},Lemma 3.3]\label{3d}
Fix $y,y'\in\T$, and let $n\in\N$ be any integer such that 
$$\|y-y'\|<\frac{1}{6 q_n'}.$$ 
Then at least one of the following holds:
\begin{itemize} 
\item[$(i)$] $0\notin\bigcup^{[\frac{q'_{n+1}}{6}]}_{k=0}R_\beta^k[y,y']$;
\item[$(ii)$] $0\notin\bigcup^{[\frac{q'_{n+1}}{6}]}_{k=0}R_\beta^{-k}[y,y']$;
\item[$(iii)$] $0\in\bigcup^{q'_{n}-1}_{k=0}R_\beta^{k}[y,y']$.
\end{itemize}
\end{lemma}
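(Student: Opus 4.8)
The plan is to fix the arc $J:=[y,y']$ (of length $\|y-y'\|<1/(6q'_n)<1/2$) and to prove that if $(i)$ and $(ii)$ both fail then $(iii)$ holds. If $0\in J$ then $0\in R_\beta^0[y,y']$ and $(iii)$ is immediate, so assume $0\notin J$. Observing that $0\in R_\beta^k J\iff -k\beta\in J$ and $0\in R_\beta^{-k}J\iff k\beta\in J$ (congruences mod $1$), the failure of $(i)$ means $a:=\min\{j\ge0:-j\beta\in J\}$ exists and satisfies $1\le a\le[q'_{n+1}/6]$, while the failure of $(ii)$ gives some $b$ with $1\le b\le[q'_{n+1}/6]$ and $b\beta\in J$. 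Since $-a\beta,b\beta\in J$, the circular distance between them is at most $\|y-y'\|$, i.e.\ $\|(a+b)\beta\|<1/(6q'_n)$. It then remains to show $a\le q'_n-1$.

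The heart of the argument is to prove $q'_n\mid(a+b)$. Here I would invoke the classical inequalities $\|q'_n\beta\|<1/q'_{n+1}$ and $\min_{1\le j<q'_n}\|j\beta\|=\|q'_{n-1}\beta\|>1/(2q'_n)$ — the last bound following from the identity $q'_n\|q'_{n-1}\beta\|+q'_{n-1}\|q'_n\beta\|=1$ together with $q'_{n+1}>2q'_{n-1}$. From $a+b\ge1$ and $\|(a+b)\beta\|<1/(2q'_n)$, the second inequality forces $a+b\ge q'_n$; also $a+b\le2[q'_{n+1}/6]\le q'_{n+1}/3$. Writing $a+b=kq'_n+r$ with $k\ge1$, $0\le r<q'_n$, suppose $r\ge1$: then $k\le(a+b)/q'_n\le q'_{n+1}/(3q'_n)$, so
\[
\|(a+b)\beta\|\ \ge\ \|r\beta\|-k\|q'_n\beta\|\ >\ \frac{1}{2q'_n}-\frac{q'_{n+1}}{3q'_n}\cdot\frac{1}{q'_{n+1}}\ =\ \frac{1}{6q'_n},
\]
contradicting $\|(a+b)\beta\|<1/(6q'_n)$. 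Hence $r=0$ and $a+b=kq'_n$ with $1\le k\le q'_{n+1}/(3q'_n)$. (This is precisely where the constant $6$ in the hypothesis is used.)

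The last step is a short ``collinearity'' argument. Consider $z_j:=-a\beta+jq'_n\beta$ for $j=0,1,\dots,k$. Since $k\|q'_n\beta\|\le\frac{q'_{n+1}}{3q'_n}\cdot\frac{1}{q'_{n+1}}=\frac{1}{3q'_n}<\frac12$, these $k+1$ points do not wrap around the circle and lie, in order, on the short arc joining $z_0=-a\beta\in J$ to $z_k=-a\beta+(a+b)\beta=b\beta\in J$. As $J$ is an arc of length $<\frac12$ containing both endpoints of that short arc, it contains the whole arc, hence every $z_j\in J$. In particular $z_1=-(a-q'_n)\beta\in J$: if $a\ge q'_n$, this yields either $0\in J$ (when $a=q'_n$), contradicting $0\notin J$, or $0\in R_\beta^{a-q'_n}[y,y']$ with $1\le a-q'_n<a$, contradicting the minimality of $a$. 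Therefore $a\le q'_n-1$, which is exactly $(iii)$.

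I expect the divisibility claim $q'_n\mid(a+b)$ to be the only genuinely delicate point — it is where the factor $6$ (rather than, say, $2$) is forced — while the rest is routine bookkeeping with best‑approximation inequalities; one should also check that the degenerate small-$n$ cases (in which $q'_{n-1}=q'_n$) cause no difficulty.
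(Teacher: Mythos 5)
The paper does not prove this lemma; it simply cites it as Lemma 3.3 of \cite{KaSo}, so there is no in-paper argument to compare your proposal against. That said, your proof is correct and self-contained. The reduction to indices $a,b\ge 1$ with $-a\beta,b\beta\in J$, the resulting bound $\|(a+b)\beta\|<1/(6q'_n)$, the best-approximation estimate $\min_{1\le j<q'_n}\|j\beta\|>1/(2q'_n)$ forcing $a+b\geq q'_n$, the divisibility $q'_n\mid (a+b)$ obtained from $1/(2q'_n)-1/(3q'_n)=1/(6q'_n)$, and the final collinearity step placing $z_1=-(a-q'_n)\beta$ in $J$ to contradict minimality of $a$ whenever $a\geq q'_n$ are all sound; you also correctly locate where the constant $6$ is actually needed. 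One loose end you flagged but did not settle: with the paper's convention $q'_0=q'_1=1$, the inequality $\|q'_{n-1}\beta\|>1/(2q'_n)$ should be checked. It holds strictly for $n\ge 2$ because then $q'_{n-1}<q'_n$, hence $q'_{n+1}\ge q'_n+q'_{n-1}>2q'_{n-1}$; for $n\le 1$ the range $1\le j<q'_n$ is empty and the claim is vacuous, so the degenerate cases indeed cause no difficulty. You might also state explicitly that $z_0\neq z_k$ (irrationality of $\beta$ rules out $\|(a+b)\beta\|=0$) before invoking the two complementary arcs, though this is immediate.
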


We will also need the following lemma, which is a simple consequence of Denjoy-Koksma inequality:

\begin{lemma}\label{lem:DK}Let $\phi:\T\to \R$ be a function of bounded variation. For every $\epsilon>0$ there exists $n_\epsilon\in \N$, such that for every $n\in \Z$ with $|n|\geq n_\epsilon$ and every $x\in \T$
$$
\left|\phi^{(n)}(x)-n\int_\T \phi d\lambda\right|<\epsilon |n|.
$$
\end{lemma}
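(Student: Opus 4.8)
The plan is to derive the lemma from the Denjoy--Koksma inequality applied to the (fixed) irrational rotation $R_\alpha$ with respect to which the cocycle $\phi^{(n)}$ is formed; the integer $n_\epsilon$ will be allowed to depend on $\alpha$. Recall that Denjoy--Koksma gives, for every denominator $q_m$ of $\alpha$ and every $x\in\T$,
$$
\Big|\phi^{(q_m)}(x)-q_m\!\int_\T\phi\,d\lambda\Big|\le \mathrm{Var}(\phi).
$$
Since $\phi\in \mathrm{BV}(\T)$ it is in particular bounded, say $\|\phi\|_\infty\le C$; write $V:=\mathrm{Var}(\phi)$ and $I:=\big|\int_\T\phi\,d\lambda\big|$.

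First I would fix, once and for all, a single denominator $q=q_N$ of $\alpha$ large enough that $V/q<\epsilon/2$; this is possible because $q_m\to\infty$. Then, for $n\ge 1$, write $n=kq+r$ with $k=\lfloor n/q\rfloor$ and $0\le r<q$, and decompose the Birkhoff sum into $k$ consecutive blocks of length $q$ plus a short tail,
$$
\phi^{(n)}(x)=\sum_{i=0}^{k-1}\phi^{(q)}\big(R_\alpha^{iq}x\big)+\phi^{(r)}\big(R_\alpha^{kq}x\big).
$$
Applying the displayed Denjoy--Koksma estimate to each block (with base point $R_\alpha^{iq}x$) gives $\big|\sum_{i=0}^{k-1}\phi^{(q)}(R_\alpha^{iq}x)-kq\int_\T\phi\big|\le kV$, while the tail is bounded trivially by $\big|\phi^{(r)}(R_\alpha^{kq}x)-r\int_\T\phi\big|\le r(C+I)\le q(C+I)$. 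Hence, uniformly in $x\in\T$,
$$
\Big|\phi^{(n)}(x)-n\!\int_\T\phi\,d\lambda\Big|\le kV+q(C+I)\le \frac{V}{q}\,n+q(C+I)<\frac{\epsilon}{2}\,n+q(C+I).
$$
Since $q$, $C$, $I$ no longer depend on $n$, choosing $n_\epsilon$ so large that $q(C+I)<\tfrac{\epsilon}{2}n$ for all $n\ge n_\epsilon$ yields the claim for positive $n$.

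The case $n<0$ follows formally from the previous one: by definition of the cocycle $\phi^{(n)}(x)=-\phi^{(|n|)}(R_\alpha^{n}x)$ and $n\int_\T\phi=-|n|\int_\T\phi$, so $\big|\phi^{(n)}(x)-n\int_\T\phi\big|=\big|\phi^{(|n|)}(R_\alpha^{n}x)-|n|\int_\T\phi\big|$, and the $x$-uniform bound just obtained applies with $|n|$ in place of $n$. I do not expect any genuine difficulty here (the lemma is, as stated, a routine consequence of Denjoy--Koksma); the only point to watch is the order of the choices: one must freeze the denominator $q$ first, in order to make the dominant term $kV\le (V/q)n$ small relative to $n$, and only afterwards let $n\to\infty$ to absorb the bounded tail error $q(C+I)$ — performing these two steps in the opposite order would leave an uncontrolled contribution.
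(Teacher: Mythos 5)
Your proposal is correct, and the core tool is the same as in the paper: the Denjoy--Koksma inequality applied uniformly in the base point to Birkhoff sums of denominator length. The only real difference is the choice of decomposition. The paper writes $n$ in its full Ostrowski expansion $n=\sum_i b_i q_i$ and sums Denjoy--Koksma bounds over all scales (leaving implicit how to turn the resulting bound, of the form $c_\phi\sum_i b_i$, into something $<\epsilon n$, which still requires splitting the expansion at a large index $j$ so that the $i\ge j$ terms give $\le n\,c_\phi/q_j$ and the $i<j$ tail is bounded independently of $n$). You instead freeze a single denominator $q=q_N$ with $\mathrm{Var}(\phi)/q<\epsilon/2$, decompose $n=kq+r$ into $k$ equal blocks plus a short tail, and absorb the tail once $n$ is large. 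This single-scale version is a genuine simplification: it makes explicit the two-step ``fix the scale, then send $n\to\infty$'' logic that the Ostrowski version also needs but does not spell out, and it avoids having to manipulate the full multi-scale expansion. Both arguments are correct; yours is a bit more transparent, and your remark about the order of the choices is exactly the point one must respect in either version.
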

\begin{proof}
By cocycle identity, it is enough to consider the case $n>0$. Notice that by Denjoy-Koksma inequality, there exists a constant $c_\phi>0$ such that for every $s\in \N$ and every $z\in \T$, we have
$$
\left|\phi^{(q_s)}(z)-q_s\int_\T \phi d\lambda\right|<c_\phi.
$$
The proof then follows by Ostrovski expansion since for $n\in \N$ we can write $n=\sum_{i=0}^kb_iq_i$, where $b_i\leq \frac{q_{i+1}}{q_i}$ and, by cocycle identity:
$$
\phi^{(n)}(x)=\sum_{i=0}^k\sum_{r=1}^{b_i} \phi^{(q_{k-i})}(x+z_{i,r}\alpha),
$$
where $z_{i,r}=\sum_{h=0}^ib_hq_h+rq_{k-i}$. This finishes the proof.
\end{proof}

\section{Disjointness criterion}\label{sec:disjoint}
We will use a variant of disjointness criterion introduced in \cite{KLU}.

\begin{proposition}\label{cri}
Let $P\subset \R^2\backslash\{(x,x):x\in\R\}$ be a compact subset and fix $c\in (0,1)$. Assume there exist $(A_k)\subset Aut(X_k,\cB|_{X_k},\lambda|_{X_k})$ for $k\geq 1$, such that $\lambda(X_k)\to \lambda(X)$,
$A_k\to Id$ uniformly.
Assume moreover that for every $\epsilon>0$ and $N\in \N$  
there exist $(E_k=E_k(\epsilon))\subset \cB$, $\lambda(E_k)\geq c\lambda(X)$, $0<\kappa=\kappa(\epsilon)<\epsilon$, $\delta=\delta(\epsilon,N)>0$, a set $Z=Z(\epsilon,N)\subset Y$, $\nu(Z)\geq (1-\epsilon)\nu(Y)$ such that for all $y,y'\in Z$ satisfying $d_2(y,y')<\delta$, every $k$ such that $d_1(A_k,Id)<\delta$ and every $x\in E_k$, $x':=A_{k}x$ there are $M\geq N$, $L\geq 1$, $\frac{L}{M}\geq \kappa$ and $(p, q)\in P$
 for which at least one of the following holds:
\begin{equation}\label{forw}
d_1(T_tx,T_{t+p}x'),\;d_2(S_ty,S_{t+q}y')<\epsilon\text{ for } t\in U\subset [M,M+L]
\end{equation}
or
\begin{equation}\label{backw}
d_1(T_{t}x,T_{t+p}x'),d_2(S_{t}y,S_{t+q}y')<\epsilon\text{ for } -t\in U\subset [M,M+L],
\end{equation} 
where $U$ is a union of at most $[c^{-1}L]+1]$ intervals and $|U|\geq (1-\epsilon)L$.
Then $(T_t)$ and $(S_t)$ are disjoint.
\end{proposition}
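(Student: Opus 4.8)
The plan is to show that every joining of $(T_t)$ and $(S_t)$ equals $\lambda\otimes\nu$. Decomposing an arbitrary joining into ergodic components — which are again joinings — it suffices to treat an ergodic joining $\rho\in J((T_t),(S_t))$ and prove $\rho=\lambda\otimes\nu$; equivalently, since a joining invariant under $T_a\times S_b$ for every $(a,b)\in\R^2$ must, by disintegrating over $Y$ and using ergodicity of $\lambda$, be $\lambda\otimes\nu$, it is enough to produce enough extra invariance of $\rho$. Fix a countable family $\mathcal F$ of bounded uniformly continuous functions on $X\times Y$ separating Borel probability measures, with moduli of continuity $\omega_F$, $F\in\mathcal F$.

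The core is a \textbf{displacement step}. Fix $\epsilon>0$ small (say $\epsilon<c/2$) and $N\in\N$, and take $E_k,\kappa=\kappa(\epsilon),\delta=\delta(\epsilon,N),Z=Z(\epsilon,N)$ as in the hypothesis. For $k$ large (so that $\lambda(X_k)$ is close to $\lambda(X)$, $d_1(A_k,\mathrm{Id})<\delta$, and $\lambda(E_k\cap X_k)\ge c-\epsilon$) a Fubini argument along the disintegration $\rho=\int_X\rho_x\,d\lambda(x)$ — using that the Birkhoff-generic points of $(T_t\times S_t,\rho)$ are $\rho$-conull, that $A_k$ preserves $\lambda$, and that the marginals of $\rho$ are $\lambda$ and $\nu$ — produces a point $x\in E_k\cap X_k$ with $x':=A_kx$ and points $y,y'\in Z$, $d_2(y,y')<\delta$, such that both $(x,y)$ and $(x',y')$ are generic for $\rho$; here one may also use the freedom to take $y'=y$ or $y'$ in a small ball about a fixed generic $y$. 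The hypothesis now yields $M\ge N$, $L\ge1$ with $L/M\ge\kappa$, a pair $(p,q)\in P$, and (say in the forward case \eqref{forw}, the backward case being symmetric) a set $U\subset[M,M+L]$, a union of at most $[c^{-1}L]+1$ intervals with $|U|\ge(1-\epsilon)L$, along which $d_1(T_tx,T_{t+p}x')<\epsilon$ and $d_2(S_ty,S_{t+q}y')<\epsilon$.

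Next, pass to empirical measures: put $\mu_{M,L}:=\frac1L\int_M^{M+L}\delta_{(T_tx,S_ty)}\,dt$ and
$$
\tilde\mu_{M,L}:=\frac1L\int_M^{M+L}\delta_{(T_{t+p}x',\,S_{t+q}y')}\,dt=(T_p\times S_q)_*\Bigl(\tfrac1L\int_M^{M+L}\delta_{(T_tx',S_ty')}\,dt\Bigr),
$$
where we used that $T_p\times S_q$ commutes with the flow $T_t\times S_t$. Since the two integrands agree to within $\epsilon$ on $U$ and $|U|\ge(1-\epsilon)L$, for every $F\in\mathcal F$ one has $\bigl|\int F\,d\mu_{M,L}-\int F\,d\tilde\mu_{M,L}\bigr|\le\omega_F(\epsilon)+2\epsilon\|F\|_\infty$. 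On the other hand, genericity of $(x,y)$ and $(x',y')$, together with $M\to\infty$ and $M/L\le\kappa^{-1}$ — writing $\frac1L\int_M^{M+L}=\frac{M+L}L\cdot\frac1{M+L}\int_0^{M+L}-\frac ML\cdot\frac1M\int_0^M$ and applying the Birkhoff theorem to each block (shifting the limits of integration by the bounded amounts $p,q$ costs $O(1/L)$) — gives $\mu_{M,L}\to\rho$ and $\frac1L\int_M^{M+L}\delta_{(T_tx',S_ty')}\,dt\to\rho$ weakly-$*$. Using compactness of $P$ to extract a limit point $(p_\epsilon,q_\epsilon)\in P$, we get $\bigl|\int F\,d\rho-\int F\,d\bigl((T_{p_\epsilon}\times S_{q_\epsilon})_*\rho\bigr)\bigr|\le\omega_F(\epsilon)+2\epsilon\|F\|_\infty$ for all $F\in\mathcal F$. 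Letting $\epsilon\to0$ along a subsequence with $(p_\epsilon,q_\epsilon)\to(p_0,q_0)\in P$ (and using continuity of $(p,q)\mapsto(T_p\times S_q)_*\rho$ in the weak-$*$ topology) yields $(T_{p_0}\times S_{q_0})_*\rho=\rho$ with $(p_0,q_0)\in P$, hence $p_0\ne q_0$; since $\rho$ is a joining, $(T_{q_0}\times S_{q_0})_*\rho=\rho$ too, so $(T_{c_0}\times\mathrm{Id})_*\rho=\rho$ with $c_0:=p_0-q_0\ne0$.

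Finally, the displacement step can be run in the backward time direction and, more importantly, for $k\to\infty$ and with $y'$ ranging over a $\delta$-ball about a fixed generic $y$; this produces invariance of $\rho$ under $T_p\times S_q$ for a whole family of $(p,q)\in P$, and one checks that this family is rich enough that the closed subgroup $H_\rho:=\{(a,b)\in\R^2:(T_a\times S_b)_*\rho=\rho\}$, which already strictly contains the diagonal, must be all of $\R^2$. In particular $\rho$ is $(T_a\times\mathrm{Id})$-invariant for every $a$; disintegrating $\rho$ over $Y$, each conditional measure on $X$ is then $(T_t)$-invariant, and as their $\nu$-average is $\lambda$, which is ergodic hence an extreme point of the $(T_t)$-invariant measures, all conditionals equal $\lambda$ — that is, $\rho=\lambda\otimes\nu$, so $(T_t)\perp(S_t)$. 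The two steps I expect to be the main obstacles are: (i) the measure-theoretic bookkeeping producing a $\rho$-generic aligned pair $(x,y),(A_kx,y')$ with $y,y'\in Z$ close; and (ii) the last step, i.e.\ extracting enough displacements $(p,q)$ to force $H_\rho=\R^2$ rather than a proper closed subgroup containing the diagonal.
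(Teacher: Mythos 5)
The paper's own proof is a one-line reduction: it invokes Theorem~3 of \cite{KLU} with the function $a(t)=t+q$ and simply verifies that the triple $(a,U,c)$ satisfies the \emph{$\epsilon$-good} condition (P1) of that paper. You do not have access to \cite{KLU} and so instead try to re-derive the disjointness from scratch by a genericity/empirical-measure argument. This is a genuinely different route, and a reasonable outline of what a self-contained proof might look like, but the two obstacles you flag at the end are real gaps, not bookkeeping.

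Concerning obstacle (ii), which is the fatal one: the displacement step, run for a fixed $\epsilon$ and then $\epsilon\to0$, produces a \emph{single} limit pair $(p_0,q_0)\in P$. Together with the diagonal invariance this only shows $H_\rho\supseteq\{(a+n(p_0-q_0),a):a\in\R,\ n\in\Z\}$, which is a \emph{proper closed} subgroup of $\R^2$ (a discrete union of lines parallel to the diagonal) and does not force $\rho=\lambda\otimes\nu$. Your fix — varying $k$, $y'$, and time direction to harvest ``a whole family'' of $(p,q)$'s — is asserted without justification: all displacements obtained live in the fixed compact $P$, and nothing in the hypothesis prevents every limit along $\epsilon\to0$ from being the same $(p_0,q_0)$; there is no lower bound on the diameter of the set of reachable $(p,q)$'s, so you cannot conclude $H_\rho$ is non-discrete transversally to the diagonal, let alone all of $\R^2$. (In fact, for a general pair of ergodic flows, $(T_{c_0}\times\mathrm{Id})_*\rho=\rho$ with $c_0\neq0$ does \emph{not} imply $\rho$ is the product — you would need, e.g., unique ergodicity of $T_{c_0}$ — so some extra input specific to the criterion is really required here and you haven't supplied it.) This is exactly the part of the argument that \cite{KLU} packages for the authors.

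Concerning obstacle (i): you want to find $x\in E_k$ with both $(x,y)$ and $(A_kx,y')$ $\rho$-generic and $d_2(y,y')<\delta$. But genericity for $\rho$ is a joint condition: the admissible $y$'s for $x$ are $\rho_x$-full and the admissible $y'$'s for $A_kx$ are $\rho_{A_kx}$-full, and there is no reason these two conditional measures put mass within $\delta$ of each other when $\rho$ is, say, a graph joining coming from an isomorphism with large distortion. The suggestion ``take $y'=y$'' doesn't help either: for a graph joining $\rho_x=\delta_{\phi(x)}$ this would force $\phi(x)=\phi(A_kx)$, which fails $\lambda$-a.e. Since ruling out exactly such joinings is the \emph{content} of the proposition, you cannot assume your way past this step; it has to be handled (as in the Ratner/KLU approach) by a more careful matching argument that does not presuppose alignment of fibers.

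So: correct high-level intuition, genuinely different route from the paper's black-box reduction, but the two steps you identify as ``obstacles'' are precisely where the proof is incomplete, and the second one is not merely technical.
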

\begin{proof}The proof is a consequence of Theorem 3 in \cite{KLU}. Namely, for $x,x',y,y'$ as in the statement of Proposition \ref{cri} we define a function $a=a_{x,x',y,y'}:[M,M+L]\to \R$ given by $a(t)=t+q$. Then by Theorem 3 in \cite{KLU}, we just need to verify that $(a,U,c)$ is $\epsilon$-good (see Definition 3.1. in \cite{KLU}, $(P1)$). This however follows straightforwardly from the definition of $a$ and $U$. The proof is thus finished.
\end{proof}

\section{Proof of Theorem \ref{th1}}\label{proof:th1}
  In this section we will use Proposition \ref{cri} to prove Theorem \ref{th1}. For simplicity, we will use the following notation: $(T_t^f)$ is a special flow over $T(x)=x+\alpha$ (with sequence of denominators $(q_n)$) and under $f(x)=A_f\{x\}+f_{ac}(x)$, where $f_{ac}\in C^1(\T)$ and $(S_t^g)$ is a special flow over $S(x)=x+\beta$ (with sequence of denominators $(q'_n)$) and under $g(x)=A_g\{x\}+g_{ac}(x)$, where $g_{ac}\in C^1(\T)$. Moreover, we assume that $|A_f|\neq |A_g|$. Note that $(T^f_t)$ is isomorphic to $(R^{h}_t)_{t\in\R}$ built over $Rx=x-\alpha$ with roof function $h(x)=A_f\{1-x\}+f_{ac}(1-x)=(-A_f)\{x\}+(A_f+f_{ac}(1-x))$. This allows to assume that $A_f,A_g>0$. We will divide the proof in two cases depending on the diophantine types of $\alpha$ and $\beta$.

\subsection{Proof in case at least one of $\alpha$ and $\beta$ is of unbounded type}\label{sec:un}

We will without loss of generality assume that $\alpha$ is of unbounded type. 

\begin{proof}[Proof of Theorem \ref{th1}]
We will verify that the assumptions of Proposition \ref{cri} are satisfied. Since Proposition \ref{cri} has many quantifiers, we will divide the proof into paragraphs, in which we indicate what quantity we are defining.
 Let $\xi:=\frac{\int_\T g d\lambda}{\int_\T f d\lambda}$ and $\Delta:=1000\max(\xi,\frac{A_f}{A_g},\frac{A_g}{A_f},A_f^{-1},A_g^{-1})$.
 
 \textbf{Definition of $c$ and $P$:}
 Let $c:=\frac{\min\{A_f,A_g,|A_f-A_g|,\xi, \xi^{-1}\}}{100\Delta^2}\in (0,1)$ and
$$
P:=\left\{(p,q):\max(|p|,|q|)\le 10\max\{A_f,A_g\},\,|p-q|\ge c^2\right\}.$$

\textbf{Definition of $A_k$ and $X_k$:}
Recall that since $\alpha$ is of unbounded type, there exists an increasing sequence $\{n_k\}_{k\in\N}$ such $\frac{q_{n_k+1}}{q_{n_k}}\to \infty$.
Define 
$$X_k:=\left\{(x,s)\in \T^f \,:\, \left(x-\frac{c}{q_{n_k}},s\right)\in\T^f\right\},
$$ 
and $A_k(x,s)=(x-\frac{c}{q_{n_k}},s)$. It follows from the definition that $\lambda^f(X_k)\to \lambda(\T^f)$, $A_k\in Aut(X_k,\cB_{|X_k},\lambda^f_{|X_k})$ and $A_k\to Id$ uniformly.

\textbf{Definition of  $E_k(\epsilon)$:}
Fix $\epsilon<\min\{\frac{g_{\min}}{4},\frac{A_g}{72},\frac{g_{\max}A_g}{72},c\}$. 
Let 
$$
\tilde{E}_k:=\bigcup_{i=-n_k}^{-c^2q_{n_k}}T^i_{\alpha}\left[\frac{2}{q_{n_k+1}},\frac{c^2}{q_{n_k}}\right].
$$ 
Notice that $\lambda(\tilde{E}_k)\geq \frac{c^{4}}{2}$.  Let
$$
E_k=E_k(\epsilon)=\tilde{E}_k^f\cap X_k.
$$
Notice that $\lambda^f(\tilde{E}_k^f)\geq (\inf_\T f)\lambda(\tilde{E_k})$, and hence $\lambda^f(E_k) >c^5$. The following property of the set $E_k$ will be crucial in the proof: Let $(x,s)\in E_k$ and denote $(x',s)=A_k(x,s)$. Then there exists $i_x\in [n_k,c^2q_{n_k}]$ such that 
\be\label{eq:fbhit}
0\in [x+i_x\alpha, x'+i_x\alpha]\text{ and } 0\in [x-(q_{n_k}-i_x)\alpha, x'-(q_{n_k}-i_x)\alpha].
\ee
Indeed, notice that since $(x,s)\in E_k\subset \tilde{E}_k^f$ it follows that there exists $i_x\in [n_k,c^2q_{n_k}]$ such that $x+i_x\alpha\in [\frac{2}{q_{n_k+1}},\frac{c^2}{q_{n_k}}]$. By the definition of $A_k$, we have $x'+i_x\alpha=x+i_x\alpha-\frac{c}{q_{n_k}}$ and so $0\in [x'+i_x\alpha,x+i_x\alpha]$ (since, by assumptions $\frac{c}{q_{n_k}}>\frac{2}{q_{n_k+1}}$). Moreover, since 
$\|q_{n_k}\alpha\|\leq \frac{1}{q_{n_k+1}}$, we have 
$$x-(q_{n_k}-i_x)\alpha=x-i_x\alpha -q_{n_k}\alpha\in \left[\frac{1}{q_{n_k+1}}, \frac{c^2}{q_{n_k}}\right]$$ and 
$$x'-(q_{n_k}-i_x)\alpha= x+i_x\alpha-\frac{c}{q_{n_k}}-q_{n_k}\alpha<0.$$ This finishes the proof of \eqref{eq:fbhit}.

\textbf{Definition of $\kappa$, $\delta$ and $Z$:}
Let $\kappa:=\kappa(\epsilon)=c\epsilon^3$.

Notice that for every $k\in \Z$ there exists $\theta_k\in [x,y]$ such that $\psi^{(k)}(x)-\psi^{(k)}(y)=\psi'^{(k)}(\theta_k)(x-y)$, for $\psi\in \{f_{ac}, g_{ac}\}$.
By Lemma \ref{lem:DK} for $\phi=f_{ac}'$ and $\phi=g_{ac}'$, there exists $\delta_\epsilon>0$ such that if $z,z'\in \T$ satisfy $\|z-z'\|<\delta_\epsilon$, then
\be\label{eq:abscont}
|f^{(k)}_{ac}(z)-f^{(k)}_{ac}(z')|<\frac{\epsilon^3}{20}\max\{1,k\|z-z'\|\},\;|g^{(k)}_{ac}(z)-g^{(k)}_{ac}(z')|<\frac{\epsilon^3}{20}\max\{1,k\|z-z'\|\}.
\ee

Let $\delta=\delta(\epsilon,N):=\min\{\delta_\epsilon,\frac{\epsilon}{10},\frac{\epsilon}{20A_g}, \frac{12\epsilon}{A_gq'_{n_0}}\}$, where $n_0$ is such that 
$$ q'_{n_0}>\max\left\{12N(\inf_\T g)^{-1},N^2(\inf_\T g)^{-2}\right\}.$$
We will now define the set $Z=Z(\epsilon,N)$. First let
$$Z_1:=\{(y,r)\in\T^g:\delta<s<g(y)-\delta\}.$$
Notice that by the definition of $\delta$, we have $\lambda^g(Z_1)>(1-\epsilon/2)\lambda^g(\T^g)$. By the definition of $Z_1$ it follows that 
for every $(y,r),(y',r')\in Z_1$ with $d^g((y,r),(y',r'))<\delta$, we have $d^g((y,r),(y',r'))=\|y-y'\|+|r-r'|$.

Let $B_n:=\bigcup^{[\sqrt{q'_n}]}_{i=-[\sqrt{ q'_n}]}R^i[-\frac{1}{6 q'_n},\frac{1}{6q'_n}]$, then $\lambda(B_n)<\frac{1}{\sqrt{q'_n}}$. Since $\sum_{n=1}^\infty\frac{1}{\sqrt{q'_n}}<\infty$ ( $(q'_n)$ grows exponentially), there is an $m\in\N$ such that $\lambda(\bigcup_{n=m}^\infty B_n)<\frac{\epsilon}{4g_{\max}}$. Define 
$$Z_2:=\left\{(x,s)\in \T^g: x\notin\bigcup_{n=m}^\infty B_n\right\},$$
then $\lambda^g(Z_2)>(1-\epsilon/2)\lambda^g(\T^g)$. 
Finally, let $Z=Z_1\cap Z_2$. Notice that we have $\lambda^g(Z)>(1-\epsilon)\lambda^g(\T^g)$.

\textbf{Main estimates:}
Take $(x,s)\in E_k$ and $(x',s)=A_k(x,s)$, where $k\in \N$ is such that $d^f(A_k,Id)<\epsilon$ and let $(y,r),(y',r')\in Z$ with  $d^g((y,r),(y',r'))\le \delta$. 
Let $n\in\N$ be the unique integer such that 
\be\label{eq:disty}
\frac{1}{6 q'_{n+1}}\le \|y-y'\|<\frac{1}{6 q'_{n}}.
\ee

We will show that in the forward case, there exists an interval $[M',M'+L']$ with $\frac{L'}{M'}\geq \epsilon^3$ and such that for every $n\in [M',M'+L']$ and some $(p,q)\in P$, we have 
\be \label{shf}
|f^{(n)}(x)-f^{(n)}(x')-p|<\epsilon^2
\ee
and
\be\label{shg}
|g^{([\xi^{-1} n])}(y)-g^{([\xi^{-1} n])}(y')-q|<\epsilon^2.
\ee The backward case is analogous.

\textbf{Claim:} \eqref{shf} and \eqref{shg} (together with the backward version) imply the statement of Proposition \ref{cri}.

\begin{proof}[Proof of the {\bf Claim}] 
Let $$M:=\max(f^{(M')}(x),g^{([\xi^{-1} M'])}(y))$$ and $$L:=\min\left(f^{(M'+L')}(x)-f^{(M')}(x), g^{([\xi^{-1} (M'+L')])}(y)
- g^{([\xi^{-1} M'])}(y)\right).$$ Notice that by Remark \ref{rem:specflow} for $(T_t^f)$ and $(S_t^g)$ it follows that for every $t\in [M,M+L]$, for which 
$$
T_t^f(x,s)\notin \partial(\T^f,\epsilon^2)\text{ and } S_t^g(y,r)\notin \partial(\T^g,\epsilon^2), 
$$
we have by \eqref{shf} and \eqref{shg} that
$$
d^f(T^f_t(x,s),T_{t+p}^f(x',s'))<\epsilon\text{ and }
d^g(S^g_t(y,r),S_{t+q}^g(y',r'))<\epsilon.
$$
Moreover, by \eqref{eq:largeprop} for $(T_t^f)$ and $(S_t^g)$ it follows that if $U:=U_{x,s}\cap U_{y,r}$, then $|U|\geq (1-\epsilon)L$ and $U$ consists of at most $3\max((\inf_\T f)^{-1},(\inf_\T g)^{-1})$ intervals.

Hence it only remains to show that $\frac{L}{M}\geq \kappa$. Let $\xi_f=\int_\T fd\lambda$. By Lemma \ref{lem:DK} for $f$ and $g$, it follows that 
$$M\in [(1-\kappa^2)\xi_fM',(1+\kappa^2)\xi_f M']$$ and 
$$M+L\in [(1-\kappa^2)\xi_f(M'+L'),(1+\kappa^2)\xi_f (M'+L')].$$
Therefore (recall that $L'\geq \epsilon^3 M'$),
$$
\frac{M+L}{M}\geq \frac{1-\kappa^2}{1+\kappa^2}(1+\epsilon^3)\geq 1+\kappa,
$$
and so indeed $\frac{L}{M}\geq \kappa$. This gives \eqref{forw} in the statement of Proposition \ref{cri}.
\end{proof}

Hence we only need to show that \eqref{shf} and \eqref{shg} hold in the forward case. If $y=y'$, then the orbits of $(y,r)$ and $(y,r')$ will never diverge. In this case, we can let $M'=q_{n_k}$, $L'=\kappa M'$, and therefore \eqref{shf} and \eqref{shg} hold for $p=A_f$, $q=0$. So in the rest of the proof, assume $y\neq y'$. We will split the proof in three cases according to Lemma \ref{3d}.

\textbf{Case 1.} Assume $(i)$ in Lemma \ref{3d} holds for $y,y'$. 

By $(i)$ and \eqref{eq:abscont} for $g_{ac}$ and $y,y'$, we get that for any $i\in \left[1,\frac{ q'_{n+1}}{6}\right]$,
\be\label{eq:partg}
\left|g^{(i)}(y)-g^{(i)}(y')- iA_g(y-y')\right|\le |g_{ac}^{(i)}(y)-g_{ac}^{(i)}(y')|\le \frac{\epsilon^3}{20}\max\{1,i\|y-y'\|\}.
\ee

Now, let $\ell=\ell_{x,x'}\in\N$ be the smallest integer such that $0\in T_\alpha^\ell[x,x']$. Since $(x,s)\in E_k$ and by \eqref{eq:fbhit}, $\ell=i_x\in [n_k,c^2q_{n_k}]$. By the definition of $\ell$ it follows that for any $0\leq j<\ell$, $0\notin [T_\alpha^jx,T_\alpha^jx']$ and therefore and by \eqref{eq:abscont} for $f$ and $x,x'$, we get 
\be\label{eq:f1}
\left|f^{(j)}(x)-f^{(j)}(x')- jA_f(x-x')\right|\le\frac{\epsilon^3}{20}\max\{1,j\|x-x'\|\}<\frac{\epsilon^3}{10},
\ee
the last inequality, since (by the definition of $A_k$) $\|x-x'\|=\frac{c}{q_{n_k}}$ and $j\leq \ell\leq c^2q_{n_k}$.

Since $0\in [T^\ell_\alpha x,T^\ell_\alpha x']$ and $\|x-x'\|=\frac{c}{q_{n_k}}$
it follows that  for any $j\in [\ell+1,\ell+q_{n_k})$, $0\notin [T^j_\alpha x,T^j_\alpha x']$. Therefore for any $j\in [\ell+1,\ell+q_{n_k})$, using \eqref{eq:abscont} for $f$, and $x,x'$, we have
\be\label{eq:f2}
\left|f^{(j)}(x)-f^{(j)}(x')+ A_f-jA_f(x-x')\right|\le\frac{\epsilon^3}{20}\max\{1,j\|x-x'\|\}<\frac{\epsilon^3}{10},
\ee
where the last inequality since $j\leq \ell+q_{n_k}\le c^2q_{n_k}+q_{n_k}$ and $\|x-x'\|=\frac{c}{q_{n_k}}$. 
We consider the following subcases:

\textbf{Subcase 1.} $\ell A_g\|y-y'\|>4c$. Let then $\ell'\in \N$ be defined by $\ell':=\left[\frac{2c}{A_g\|y-y'\|}\right]$. By definition $\ell'\leq \frac{\ell}{2}$. Then by \eqref{eq:f1} for any $j\in[ \ell',(1+\epsilon^3) \ell']$, we have 
\be\label{fpart}
\left|f^{(j)}(x)-f^{(j)}(x')-\ell' A_f(x-x')\right|\leq \epsilon^3\ell'A_f\|x-x'\|+\epsilon^3/10\leq \epsilon^3\ell A_f\frac{c}{q_{n_k}}+\epsilon^3/10\leq \epsilon^2,
\ee
since $\ell \leq c^2q_{n_k}$. 
Notice that
\be\label{defp}
|p|:=\ell' A_f\|x-x'\|\leq \ell A_f\|x-x'\|\leq c^3A_f\leq c\xi^{-1}
\ee
 (by taking a smaller $c>0$ if necessary). 
Moreover, by \eqref{eq:partg} and the definition of $\ell'$, for $j\in [\ell', (1+\epsilon^3)\ell']$, we have 
\begin{multline}\label{eq:ooo}
\left|g^{([\xi^{-1}j])}(y)-g^{([\xi^{-1}j])}(y')-
[\xi^{-1}\ell']A_g(y-y')\right|\leq\\
\left([\xi^{-1}\ell']-[\xi^{-1}(1+\epsilon^3)\ell']\right)A_g\|y-y'\|+
\frac{\epsilon^3}{20}\max(1,[\xi^{-1}(1+\epsilon^3)\ell']\|y-y'\|)
\leq \epsilon^2,
\end{multline}
the last inequality by the definition of $\ell'$. We also have 
\be\label{defq}
|q|:=[\xi^{-1}\ell']A_g\|y-y'\|\in [2c\xi^{-1}-A_g\delta_{\epsilon},2c\xi^{-1}].
\ee
Notice that by \eqref{fpart}, \eqref{defp} and \eqref{eq:ooo}  \eqref{defq} it follows that \eqref{shf} and \eqref{shg} hold for $M'=\ell', L'=\epsilon^3M'$. Moreover, by \eqref{defp} and \eqref{defq} it follows that $(p,q)\in P$. This finishes the proof of \textbf{Subcase 1.}

\textbf{Subcase 2.} $\ell A_g\|y-y'\|\leq 4c$. 
Then by \eqref{eq:f2} for any $j\in[ \ell+1,(1+\epsilon^3) (\ell+1))$, we have (recall that $\ell\leq c^2q_{n_k}$)
\be\label{fs2}
\left|f^{(j)}(x)-f^{(j)}(x')+A_f- \ell A_f(x-x')\right|\leq \epsilon^3\ell
 A_f\|x-x'\|+\epsilon^3/10\leq \epsilon^3\ell A_f\frac{c}{q_{n_k}}+\epsilon^3/10\leq \epsilon^2
 \ee
Notice that if $p:=-A_f+\ell A_f(x-x')$, then 
\be\label{defp2}
-A_f\leq p\leq -A_f+\ell A_f\|x-x'\|\leq -A_f+c^2.
\ee
 Since $\ell A_g\|y-y'\|<4c$, we get (see \eqref{eq:disty}) $\xi^{-1}(1+\epsilon^3) (\ell+1)\leq \frac{q'_{n+1}}{6}$ and therefore, for every $j\in  [ \ell+1,(1+\epsilon^3) (\ell+1))$, we have
 \begin{multline}\label{eq:g2}
\left|g^{([\xi^{-1}j])}(y)-g^{(([\xi^{-1}j]))}(y')-
[\xi^{-1}\ell]A_g(y-y')\right|\leq\\
\left([\xi^{-1}\ell]-[\xi^{-1}(1+\epsilon^3)\ell]\right)A_g\|y-y'\|+
\frac{\epsilon^3}{20}\max(1,[\xi^{-1}(1+\epsilon^3)\ell]\|y-y'\|)
\leq \epsilon^2,
\end{multline}
 since $\ell A_g\|y-y'\|<4c$. Let $q:= [\xi^{-1}\ell]A_g(y-y')$, then
 \be\label{defq2}
 |q|\leq \xi^{-1}\ell A_g\|y-y'\|\leq 4c\xi^{-1}\leq \frac{A_f}{2},
 \ee
(by taking smaller $c>0$ if necessary). We define $M':=\ell+1$ and $L'=(1+\epsilon^3)M'$, then by \eqref{defp2} and \eqref{defq2}, we have that $(p,q)\in P$ and by \eqref{fs2} and \eqref{eq:g2} we get that \eqref{shf} and \eqref{shg} hold on $[M',M'+L']$. This finishes the proof of \textbf{Subcase 2. } and hence also the proof of \textbf{Case 1.}


\textbf{Case 2.} Assume (ii) holds. The proof is analogous to the proof in \textbf{Case 1}, by considering backward iterations.


\textbf{Case 3.} Assume (iii) holds. Let $k_0$ be the least real number such that $[\xi^{-1}k_0]\in [0,q'_n-1]$ and $0\in R^{[\xi^{-1}k_0]}[y,y']$. Then by the definition of $Z$, $\xi^{-1}k_0\ge \sqrt{ q'_n}$, and $0\notin R^k[y,y']$ for any $k\in [0,[\xi^{-1}k_0]+ q'_n], k\neq [\xi^{-1}k_0]$. Hence if $\xi^{-1}k<[\xi^{-1}k_0]$, by Lemma \ref{lem:DK}, for $\phi=g'_{ac}$ 
\be\label{eq:gfi}
\left||g^{([\xi^{-1}k])}(y)-g^{([\xi^{-1}k])}(y')|-[\xi^{-1}k]A_g\|y-y'\|\right|\le |g^{([\xi^{-1}k])}_{ac}(y)-g^{([\xi^{-1}k])}_{ac}(y')|\le \frac{\epsilon^3}{20}.
\ee
Moreover for 
$k_0+\xi<k<k_0+q_n'$, 
\be\label{eq:gfi2}
\left|g^{([\xi^{-1}k])}(y)-g^{([\xi^{-1}k])}(y')\pm A_g-[\xi^{-1}k]A_g(y-y')\right|\le |g^{([\xi^{-1}k])}_{ac}(y)-g^{([\xi^{-1}k])}_{ac}(y')|\le \frac{\epsilon^3}{20},
\ee
where $\pm$ depends only on $y,y'$.

Let $\ell\in\N$ be the smallest integer such that $0\in R^\ell[x,x']$. By the definition of $E_k$, $n_k\le \ell\le c^2q_{n_k}$. It is clear that for any $j<\ell$, 
\be\label{eq:fj}
\left|f^{(j)}(x)-f^{(j)}(x')- jA_f(x-x')\right|\le\frac{\epsilon^3}{20}\max\{1,j\|x-x'\|\}<\frac{\epsilon^3}{10},
\ee
 and for any $j\in [\ell,\ell+q_{n_k})$, 
 \be\label{eq:fj2}
 \left|f^{(j)}(x)-f^{(j)}(x')- A_f- jA_f(x-x')\right|\le\frac{\epsilon^3}{20}\max\{1,j\|x-x'\|\}<\frac{\epsilon^3}{10}.
  \ee

\textbf{Subcase 1.} $k_0\leq (1-\epsilon)\ell$.
notice that for every $T\in [0,(1+\epsilon^3)k_0]$ (note that $(1+\epsilon^3)k_0<\ell$)  and every $j\in [T,(1+\epsilon^3)T]$, by \eqref{eq:fj}, we have
\be\label{eq:fsm}
\left|f^{(j)}(x)-f^{(j)}(x')-TA_f(x-x')\right|\leq \epsilon^3\ell A_f\|x-x'\|+\epsilon^3/10\leq \epsilon^2,
\ee
and if $p(T):=TA_f(x-x')$, then 
\be\label{maxp}
|p(T)|\leq \ell A_f\|x-x'\|\leq c^3A_f.
\ee
Moreover,  by \eqref{eq:gfi} for every $j\in[(1-\epsilon^3)k_0,k_0]$, by \eqref{eq:disty} and $k_0\leq q'_n$, we have
\be\label{sub11}
\left||g^{([\xi^{-1}j])}(y)-g^{([\xi^{-1}j])}(y')|-[\xi^{-1}k_0]A_g\|y-y'\|\right|\le \xi^{-1}\epsilon^3k_0A_g\|y-y'\|+\epsilon^3/20\leq \epsilon^2.
\ee
Similarly, by \eqref{eq:gfi2}, for every $j\in[k_0+1,(1+\epsilon^3)(k_0+1)]$, by \eqref{eq:disty} and $k_0\leq q'_n$, we have
\be\label{sub12}
\left|g^{([\xi^{-1}k])}(y)-g^{([\xi^{-1}k])}(y')\pm (A_g-[\xi^{-1}k_0]A_g(y-y'))\right|\leq \epsilon^2.
\ee
Notice that if $q_1:=[\xi^{-1}k_0]A_g\|y-y'\|$ and $q_2:=\pm(A_g-[\xi^{-1}k_0]A_g(y-y'))$, then 
\be\label{maxq}
\max(|q_1|,|q_2|)\geq \frac{A_g}{2}.
\ee
By \eqref{eq:fsm}, \eqref{sub11} and \eqref{sub12} it follows that \eqref{shf} and \eqref{shg} hold on $$[M'_1,M'_1+L'_1]:=[[(1-\epsilon^3)k_0],(1-\epsilon^6)k_0]$$ with $p_1:=p((1-\epsilon^3)k_0)$ and $q_1$, and also on $$[M'_2,M'_2+L'_2]:=[k_0+1,(1+\epsilon^3)(k_0+1)]$$ with $p_2:=p(k_0+\xi)$ and $q_2$. Moreover, by \eqref{maxp} and \eqref{maxq}, at least one of $(p_1,q_1)$, $(p_2,q_2)$ belongs to $P$.

\textbf{Subcase 2.} $k_0\ge (1+\epsilon)\ell$. In this case let $M'_1:=\ell+1$ and $L'_1=\epsilon^3(\ell+1)$. Notice that $M'_1+L'_1<k_0$ and hence for every $j\in [M_1',M_1'+L_1']$ (reasoning analogously to \textbf{Case 1}), by \eqref{eq:fj2}, we have 
$$
\left|f^{(j)}(x)-f^{(j)}(x')-p_1\right|<\epsilon^2,
$$
where $p_1=-A_f+M_1'A_f(x-x')$ and similary by \eqref{eq:gfi}
$$
\left||g^{([\xi^{-1}j])}(y)-g^{([\xi^{-1}j])}(y')|-q_1\right|\le \epsilon^2,
$$
where $q_1:=[\xi^{-1}M_1']A_g(y-y')$.
Let $M'_2:=[\ell/2]$ and $L'_2=\epsilon^3M'_2$. Then 
 for every $j\in [M_2',M_2'+L_2']$ (reasoning analogously to \textbf{Case 1}), by \eqref{eq:fj}, we have 
$$
\left|f^{(j)}(x)-f^{(j)}(x')-p_2\right|<\epsilon^2,
$$
where $|p_2|=|-M_2'A_f(x-x')|\leq c^3A_f$ and similary by \eqref{eq:gfi}
$$
\left||g^{([\xi^{-1}j])}(y)-g^{([\xi^{-1}j])}(y')|-q_2\right|\le \epsilon^2,
$$
where $q_2:=[\xi^{-1}M_2']A_g(y-y')$. We will show that \eqref{shf} and \eqref{shg} hold either on $[M'_1,M'_1+L'_1]$ or on  $[M'_2,M'_2+L'_2]$.

 By the above, we only need to show that at least one  of $(p_1,q_1)$ and $(p_2,q_2)$ belongs to $P$.
Notice that $q_2\geq \frac{q_1}{3}$, $p_1\geq 10p_2$ and $|p_1|\ge \frac{A_f}{2}$ (since $c>0$ is small). Therefore if $|p_2-q_2|<c^2$, then $|p_1-q_1|>c^2$. This finishes the proof in \textbf{Subcase 2.}

\textbf{Subcase 3.} $(1-\epsilon)\ell\le k_0\le (1+\epsilon)\ell$.
The proof is similar to the proof in \textbf{Subcase 2.}.

Let $M_1:=(1-\epsilon)\min(k_0,\ell)$ and $L_1:=\epsilon^3M_1$,
$M_2:=(1+\epsilon)\max(k_0,\ell)$ and $L_2:=\epsilon^3M_2$. 
Notice that by \eqref{eq:fj} for every $j\in [M_1',M_1'+L_1']$, we have 
\be\label{l1}
\left|f^{(j)}(x)-f^{(j)}(x')-p_1\right|<\epsilon^2,
\ee
where $p_1=M_1A_f(x-x')$ and similary by \eqref{eq:gfi}
\be\label{l2}
\left||g^{([\xi^{-1}j])}(y)-g^{([\xi^{-1}j])}(y')|-q_1\right|\le \epsilon^2,
\ee
where $q_1:=[\xi^{-1}M_1]A_g(y-y')$.
Similarly, by \eqref{eq:fj2} for every $j\in [M_2,M_2+L_2]$, we have 
\be\label{l3}
\left|f^{(j)}(x)-f^{(j)}(x')-p_1\right|<\epsilon^2,
\ee
where $p_2=-A_f+M_2A_f(x-x')$ and similary by \eqref{eq:gfi}
\be\label{l4}
\left||g^{([\xi^{-1}j])}(y)-g^{([\xi^{-1}j])}(y')|-q_1\right|\le \epsilon^2,
\ee
where $q_2=\pm(A_g-[\xi^{-1}M_2]A_g(y-y'))$.

Notice that $|p_1|<c^3A_f$, $|q_1|<\xi^{-1}A_g$ and $|p_2|<A_f+c^3A_f$ and $|q_2|<A_g+\xi^{-1}A_g$. If
 $(p_1,q_1)\in P$, then \eqref{shf} and \eqref{shg} hold on $[M_1,M_1+L_1]$ (see \eqref{l1} and \eqref{l2}). If $(p_1,q_1)\notin P$, then
$$
|p_1-q_1|<c^2,
$$
and so $|q_1|<2c^2\max(1,A_f)$. We also have $|q_2-A_g|\leq 2|q_1|\leq 4c^2\max(1,A_f)$. Therefore 
$$
|p_2-q_2|\geq |A_f-A_g|-c^2A_f-4c^2\max(1,A_f)\ge \frac{1}{2} |A_f-A_g|\ge c.
$$
Therefore $(p_2,q_2)\in P$ and \eqref{shf} and \eqref{shg} hold on $[M_2,M_2+L_2]$ (see \eqref{l3} and \eqref{l4}).

This finishes the proof of \textbf{Subcase 3.} and hence also the proof of Theorem \ref{th1}.
\end{proof}

\subsection{The proof when both $\alpha$ and $\beta$ are of bounded type}\label{sec:bo}

The argument here is similar to that in previous subsection. Recall that $A_f>0, A_g>0$. 

\begin{proof}[Proof of Theorem \ref{th1}]
We will verify that the assumptions of Proposition \ref{cri} are satisfied. Let $\xi:=\frac{\int_\T g d\lambda}{\int_\T f d\lambda}$ and $\Delta:=1000\max(\xi,\frac{A_f}{A_g},\frac{A_g}{A_f},A_f^{-1},A_g^{-1})$.
 
 \textbf{Definition of $c$ and $P$:}
 Let $a_0:=10\max\left(\sup\{\frac{q_{n+1}}{q_n}\},\sup\{\frac{q'_{n+1}}{q'_n}\}\right)$, and $$c:=\frac{\min\{A_f,A_g,a_0^{-1},|A_f-A_g|,\xi,\xi^{-1}\}}{100\Delta^2}>0.$$
 Let
$
P:=\left\{(p,q):\max(|p|,|q|)\le 10\max\{A_f,A_g\},\,|p-q|\ge c^2\right\}.$


\textbf{Definition of $A_k$ and $X_k$:}
Since $\alpha$ is of bounded type, we have $\frac{q_{n+1}}{q_{n}}<\frac{1}{2c}$ for all $n$.
Define 
$$
X_k:=\left\{(x,s)\in \T^f \;:\; (x-\frac{c}{q_{k}},s)\in\T^f\right\},
$$ 
and $A_k(x,s)=(x-\frac{c}{q_{k}},s)$. It follows from the definition that $\lambda^f(X_k)\to \lambda^f(\T^f)$, $A_k\in Aut(X_k,\cB_{|X_k},\lambda^f_{|X_k})$ and $A_k\to Id$ uniformly.

\textbf{Definition of  $E_k(\epsilon)$:}
Fix $\epsilon<\min\{\frac{g_{\min}}{4},\frac{A_g}{72},\frac{g_{\max}A_g}{72},c\}$. 
Let 
$$
\tilde{E}_k:=\bigcup_{i=-k}^{-c^2q_{k}}T^i_{\alpha}\left[\frac{c^2}{q_{k}},\frac{2c^2}{q_{k}}\right].
$$ 
Notice that $\lambda(\tilde{E}_k)\geq \frac{c^{4}}{2}$.  Let
$$
E_k:=E_k(\epsilon)=\tilde{E}_k^f\cap X_k.
$$
Notice that $\lambda^f(\tilde{E}_k^f)\geq (\inf_\T f)\lambda(\tilde{E_k})$, and hence $\lambda^f(E_k) >c^5$. By the definition of $E_k\subset \tilde{E}_k^f$ it follows that if $(x,s)\in E_k$ and $(x',s)=A_k(x,s)$, then there exists a unique $i_x\in [k,c^2q_{k}]$ such that 
\be\label{eq:fbhit-1}
0\in [x+i_x\alpha, x'+i_x\alpha].
\ee
\textbf{Definition of $\kappa$, $\delta$ and $Z$:} Let $\kappa:=\kappa(\epsilon)=c\epsilon^3$.

Notice that for every $k\in \Z$ there exists $\theta_k\in [x,y]$ such that $\psi^{(k)}(x)-\psi^{(k)}(y)=\psi'^{(k)}(\theta_k)(x-y)$, for $\psi\in \{f_{ac}, g_{ac}\}$.
By Lemma \ref{lem:DK} for $\phi=f_{ac}'$ and $\phi=g_{ac}'$, there exists $\delta_\epsilon>0$ such that if $z,z'\in \T$ satisfy $\|z-z'\|<\delta_\epsilon$, then
\be\label{eq:abscont-1}
|f^{(k)}_{ac}(z)-f^{(k)}_{ac}(z')|<\frac{\epsilon}{20}\max\{1,k\|z-z'\|\},\;|g^{(k)}_{ac}(z)-g^{(k)}_{ac}(z')|<\frac{\epsilon}{20}\max\{1,k\|z-z'\|\}.
\ee

Let $\delta=\delta(\epsilon,N):=\min\{\delta_\epsilon,\frac{\epsilon}{10},\frac{\epsilon}{20A_g}, \frac{12\epsilon}{A_g q'_{n_0}}\}$, where $n_0$ is such that 
$$q'_{n_0}>\max\left\{12N(\inf_\T gd\lambda)^{-1},N^2(\inf_\T gd\lambda)^{-2}\right\}.$$
We will now define the set $Z=Z(\epsilon,N)$. First let
$$Z_1:=\{(y,r)\in\T^g:\delta<s<g(y)-\delta\}.$$
Notice that by the definition of $\delta$, we have $\lambda^g(Z_1)>(1-\epsilon/2)\lambda^g(\T^g)$. By the definition of $Z_1$ it follows that 
for every $(y,r),(y',r')\in Z_1$ with $d^g((y,r),(y',r'))<\delta$, we have $d^g((y,r),(y',r'))=\|y-y'\|+|r-r'|$.

Let $B_n:=\bigcup^{[\sqrt{q'_n}]}_{i=-[\sqrt{q'_n}]}R^i[-\frac{1}{6  q'_n},\frac{1}{6 q'_n}]$, then $\lambda(B_n)<\frac{1}{\sqrt{ q'_n}}$. Since $\sum_{n=1}^\infty\frac{1}{\sqrt{q'_n}}<\infty$ ( $( q'_n)$ grows exponentially), there is an $m\ge N$ such that $\lambda(\bigcup_{n=m}^\infty B_n)<\frac{\epsilon}{4g_{\max}}$. Define 
$$Z_2:=\left\{(x,s)\in \T^g: x\notin\bigcup_{n=m}^\infty B_n\right\},$$
then $\lambda^g(Z_2)>(1-\epsilon/2)\lambda^g(\T^g)$. 
Finally, let $Z=Z_1\cap Z_2$. Notice that we have $\lambda^g(Z)>(1-\epsilon)\lambda^g(\T^g)$.

\textbf{Main estimates:}
Take $(x,s)\in E_k$ and $(x',s)=A_k(x,s)$, where $k\in \N$ is such that $d^f(A_k,Id)<\epsilon$ and let $(y,r),(y',r')\in Z$ with  $d^g((y,r),(y',r'))\le \delta$. 
Let $n\in\N$ be the unique integer such that 
\be\label{eq:disty-1}
\frac{1}{2 q'_{n+1}}\le \|y-y'\|<\frac{1}{2 q'_{n}}.\ee


Let $\ell_0$ be the smallest positive integer that $\{\ell_0\alpha\}\in [y,y']$. It is clear by the definition of $Z$ that $\sqrt{q'_n}\le \ell_0\le  q'_{n+1}$, and $\{j\alpha\}\notin [y,y']$ for any $j\in [1,\ell_0+ q'_n)$ with $j\neq \ell_0$. Hence if $j\in [1,\ell_0-1]$, by \eqref{eq:abscont-1}
\be \label{arg1}
\left||g^{(j)}(y)-g^{(j)}(y')|-jA_g\|y-y'\|\right|\le |g^{(j)}_{ac}(y)-g^{(j)}_{ac}(y')|\le \frac{\epsilon^3}{20};
\ee
and if $\ell_0<j<\ell_0+\bar q_n$, 
\be \label{arg2}
\left||g^{(j)}(y)-g^{(j)}(y')\pm A_g|-jA_g\|y-y'\|\right|\le |g^{(j)}_{ac}(y)-g^{(j)}_{ac}(y')|\le \frac{\epsilon^3}{20},
\ee 
where $\pm$ depends only on $y,y'$.

For $(x,s),(x',s)$, let $m\in\N$ be the smallest integer such that $0\in R^m[x,x']$. By the construction of $E_i$, $n_k\le m\le c^2q_{n_k}$. Similarly, by \eqref{eq:abscont-1} for any $j\in[1,m-1]$, 
\be\label{e-1-1}
\left|f^{(j)}(x)-f^{(j)}(x')- jA_f(x-x')\right|\le\frac{\epsilon}{20}\max\{1,j\|x-x'\|\}<\frac{\epsilon^3}{20},
\ee
and for any $j\in [m+1,m+q_{n_k})$, 
\be\label{e-1-2}
\left|f^{(j)}(x)-f^{(j)}(x')- A_f- jA_f(x-x')\right|\le\frac{\epsilon}{20}\max\{1,j\|x-x'\|\}<\frac{\epsilon^3}{10}.
\ee

Reasoning as in the previous subsection (see \eqref{shf}, \eqref{shg} and the proof of the \textbf{Claim}) it suffices to show that there exists an interval $[M',M'+L']$ with $\frac{L'}{M'}\geq \epsilon^3$, such that for every $n\in [M',M'+L']$ and some $(p,q)\in P$, we have 
\be \label{shf-1}
\left|f^{(n)}(x)-f^{(n)}(x')-p\right|<\epsilon^2
\ee
and
\be\label{shg-1}
\left|g^{([\xi^{-1} n])}(y)-g^{([\xi^{-1} n])}(y')-q\right|<\epsilon^2.
\ee

Due to the same reason as the previous unbounded type case, assume further $y\neq y'$. We consider  three cases, which are analogous to \textbf{Subcase 1}--\textbf{Subcase 3}  in {\bf Case 3.} in Subsection \ref{sec:un}.


\textbf{A.} $\ell_0\le (1-\epsilon)m\xi^{-1}$. Notice that for every $j\in[(1-\epsilon^3)\xi\ell_0,\xi\ell_0)$ and $ j\in(\xi\ell_0,(1+\epsilon^3)\xi\ell_0]$, (note that $(1+\epsilon^3)\xi\ell_0<m$) by \eqref{e-1-1}, we have
\be\label{eq:fsm-1}
\left|f^{(j)}(x)-f^{(j)}(x')-\xi\ell_0A_f(x-x')\right|\leq \epsilon^3\xi\ell_0 A_f\|x-x'\|+\epsilon^3/10\leq \epsilon^2,
\ee
and if $p:=\xi\ell_0A_f(x-x')$, then 
\be\label{maxp-1}
|p|\leq \xi\ell_0 A_f\|x-x'\|\leq c^3A_f.
\ee
Moreover,  by \eqref{arg1} and \eqref{eq:disty-1}, for every $j\in[(1-\epsilon^3)\xi\ell_0,\xi\ell_0)$, and the fact $\ell_0\leq a_0q'_n$, we have
\be\label{sub11-1}
\left||g^{([\xi^{-1}j])}(y)-g^{([\xi^{-1}j])}(y')|-[\xi\ell_0]A_g\|y-y'\|\right|\le \xi\epsilon^3\ell_0A_g\|y-y'\|+\epsilon^3/20\leq \epsilon^2.
\ee
Similarly, by \eqref{arg2}and \eqref{eq:disty-1}, for every $j\in[\xi\ell_0+1,(1+\epsilon^3)\xi\ell_0]$, and $\ell_0\leq a_0q'_n$, we have
\be\label{sub12-1}
\left|g^{([\xi^{-1}j])}(y)-g^{([\xi^{-1}j])}(y')\pm A_g-[\xi\ell_0]A_g(y-y')\right|\leq \epsilon^2.
\ee
Notice that if $q_1:=[\xi\ell_0]A_g\|y-y'\|$ and $q_2:=\pm A_g-[\xi\ell_0]A_g(y-y')$, then 
\be\label{maxq-1}
\max(|q_1|,|q_2|)\geq A_g/2.
\ee
 By \eqref{eq:fsm-1}, \eqref{sub11-1} and \eqref{sub12-1} it follows that \eqref{shf-1} and \eqref{shg-1} hold with $p,q_1$ on $$[M'_1,M'_1+L'_1]=[[(1-\epsilon^3)\xi\ell_0],\xi\ell_0-1]$$ and also with $p,q_2$ on $$ [M'_2,M'_2+L'_2]=[[\xi\ell_0]+1,(1+\epsilon^3)\xi\ell_0].$$ By \eqref{maxp-1} and \eqref{maxq-1}, at least one of $(p,q_1)$, $(p,q_2)$ belongs to $P$.


\textbf{B.} $\ell_0\ge (1+\epsilon)m\xi^{-1}$.

In this case let $M'_1:=m+1$ and $L'_1=\epsilon^3m-1$. Notice that $M'_1+L'_1<\xi\ell_0$ and hence for every $j\in [M_1',M_1'+L_1']$ (reasoning analogously to \textbf{A.}), by \eqref{e-1-2}, we have 
$$
\left|f^{(j)}(x)-f^{(j)}(x')-p_1\right|<\epsilon^2,
$$
where $p_1:=A_f-M_1'A_f(x-x')$ and similarly by \eqref{arg1}
$$
\left||g^{([\xi^{-1}j])}(y)-g^{([\xi^{-1}j])}(y')|-q_1\right|\le \epsilon^2,
$$
where $q_1:=[\xi^{-1}M_1']A_g(y-y')$.
Let $M'_2:=[m/2]$ and $L'_2=\epsilon^3M'_2$. Then 
 for every $j\in [M_2',M_2'+L_2']$ (reasoning analogously to \textbf{A.}), by \eqref{e-1-1}, we have 
$$
\left|f^{(j)}(x)-f^{(j)}(x')-p_2\right|<\epsilon^2,
$$
where $p_2=-M_2'A_f(x-x')\leq c^3A_f$ and similary by \eqref{arg1}
$$
\left||g^{([\xi^{-1}j])}(y)-g^{([\xi^{-1}j])}(y')|-q_2\right|\le \epsilon^2,
$$
where $q_2:=[\xi^{-1}M_2']A_g(y-y')$. We will show that \eqref{shf-1} and \eqref{shg-1} hold either on $[M'_1,M'_1+L'_1]$ or on  $[M'_2,M'_2+L'_2]$.

 By the above, we only need to show that at least one  of $(p_1,q_1)$ and $(p_2,q_2)$ belongs to $P$.
Notice that $|q_2|\geq \frac{|q_1|}{3}$, $|p_1|\geq 10|p_2|$ and $|p_1|\geq \frac{A_f}{2}$ (since $c>0$ is small). Therefore if $|p_2-q_2|<c^2$, then $|p_1-q_1|\ge c^2$. This finishes the proof in \textbf{B.}.

\textbf{C.} $(1-\epsilon)m\xi^{-1}\le \ell_0\le (1+\epsilon)m\xi^{-1}$.
The proof is similar to the proof in \textbf{B.}.

Let $M_1':=(1-\epsilon)\min(\xi\ell_0,m)$ and $L_1':=\epsilon^3M_1'$,
$M_2':=(1+\epsilon)\max(\xi\ell_0,m)$ and $L_2':=\epsilon^3M_2'$. 
Notice that for every $j\in [M_1',M_1'+L_1']$, by \eqref{e-1-1} we have 
\be\label{l1-1}
\left|f^{(j)}(x)-f^{(j)}(x')-p_1\right|<\epsilon^2,
\ee
where $p_1=M_1'A_f(x-x')$ and similary by \eqref{arg1}
\be\label{l2-1}
\left||g^{([\xi^{-1}j])}(y)-g^{([\xi^{-1}j])}(y')|-q_1\right|\le \epsilon^2,
\ee
where $q_1:=[\xi^{-1}M_1']A_g(y-y')$.
Similarly, for every $j\in [M_2',M_2'+L'_2]$, by \eqref{e-1-2} we have 
\be\label{l3-1}
\left|f^{(j)}(x)-f^{(j)}(x')-p_1\right|<\epsilon^2,
\ee
where $p_2=-A_f+M_2'A_f(x-x')$ and similary by \eqref{arg2}
\be\label{l4-1}
\left||g^{([\xi^{-1}j])}(y)-g^{([\xi^{-1}j])}(y')|-q_1\right|\le \epsilon^2,
\ee
where $q_2=\pm A_g-[\xi^{-1}M'_2]A_g(y-y')$.

Notice that $|p_1|<c^3A_f$, $|q_1|<\xi^{-1}A_g$ and $|p_2|<A_f+c^3A_f$ and $|q_2|<A_g+\xi^{-1}A_g$. If
 $(p_1,q_1)\in P$, then \eqref{shf-1} and \eqref{shg-1} hold on $[M'_1,M'_1+L'_1]$ (see \eqref{l1-1} and \eqref{l2-1}). If $(p_1,q_1)\notin P$, then
$$
|p_1-q_1|<c^3,
$$
and so $|q_1|<2c^3\max(1,A_f)$. We also have $||q_2|-A_g|\leq 2|q_1|\leq 4c^3\max(1,A_f)$. Therefore 
$$
\big||p_2|-| q_2|\big|\geq |A_f-A_g|-c^3A_f-4c^3\max(1,A_f)>\frac{1}{2}|A_f-A_g|\geq c^2,
$$
if $c$ is small enough. Therefore $(p_2,q_2)\in P$ and \eqref{shf-1} and \eqref{shg-1} hold on $[M_2,M_2+L_2]$ (see \eqref{l3-1} and \eqref{l4-1}).

This finishes the proof of \textbf{C.} and hence also the proof of Theorem \ref{th1}.
\end{proof}

\section{Proof of Corollary \ref{cor1}}\label{proof:cor}

\begin{proof}[Proof of Corollary \ref{cor1}]
Let $\cD:=DC$ (see \eqref{dioph}) and let $\alpha\in\cD$ and $\phi,\psi\in C^{\infty}(\T^2)$. Recall that for $\tau\in\{\psi,\phi\}$, the flow $(v_t^{\alpha,\tau})$ is isomorphic to $(T^{\alpha,f_\tau}_t)$, where $A_{f_\tau}=\int_{\partial D}\tau(v_t^\alpha)dt>0$. Moreover, $\psi,\phi$ are cohomologous for $(v_t^\alpha$) if and only if $f_\psi, f_{\phi}$ are cohomologous for $R_\alpha$\footnote{This follows from the fact that $(f_{\tau},R_\alpha)$ is the first return map and the first return time of the flow $v_t^{\alpha,\tau}$, $\tau\in\{\psi,\phi\}$}. By Theorem \ref{th1} and \cite{Kol}, it follows that the latter holds if and only if $A_{f_\psi}=A_{f_\phi}$. Indeed, the fact that it is neccesary follows from  Theorem \ref{th1}, and that it is sufficient from \cite{Kol}, as in this case (since the jumps cancel out)
$$
f_{\psi}-f_{\phi}\in C_0^{\infty}(\T),
$$
here we also use the fact that $\int_{\T^2}\phi d\lambda_{\T^2}=\int_{\T^2}\psi d\lambda_{\T^2}$ to know that $\int_{\T}(f_{\psi}-f_{\phi})d\lambda_{\T}=0$.
\end{proof}

\textbf{Acknowledgements.} The authors would like to thank Krzysztof Fr\c aczek for several discussions on the subject.

\end{document}